\newcommand{\er}{\mathbf{e}_r}
\newcommand{\et}{\mathbf{e}_{\theta}}
\newcommand{\ez}{\mathbf{e}_z}
\newcommand{\exx}{\mathbf{e}_1}
\newcommand{\eyy}{\mathbf{e}_2}
\newcommand{\ezz}{\mathbf{e}_3}
\newcommand{\vct}[1]{\bm{\mathsf{#1}}}
\newcommand{\mtx}[1]{\bm{\mathsf{#1}}}
\newcommand{\bn}{\mathbf{n}}
\newcommand{\bbb}{\mathbf{b}}
\newcommand{\btau}{\boldsymbol{\tau}}
\newcommand{\bt}{\mathbf{t}}
\newcommand{\bx}{\mathbf{x}}
\newcommand{\by}{\mathbf{y}}
\newcommand{\gccm}{G_m^{\cos}}
\newcommand{\gssm}{G_m^{\sin}}
\newcommand{\gm}{G_m}
\newcommand{\bzero}{\mathbf{0}}
\newcommand{\be}{\mathbf{E}}
\newcommand{\beinc}{\mathbf{E^{\text{inc}}}}
\newcommand{\besc}{\mathbf{E^{\text{sc}}}}
\newcommand{\beint}{\mathbf{E^{\text{int}}}}
\newcommand{\beext}{\mathbf{E^{\text{ext}}}}
\newcommand{\beinte}{\mathbf{E_{\text{e}}^{\text{int}}}}
\newcommand{\beexte}{\mathbf{E_{\text{e}}^{\text{ext}}}}
\newcommand{\cS}{\mathcal S}
\newcommand{\Etinc}{\tilde{\mathbf{E}}^{\text{inc}}}
\newcommand{\Htinc}{\tilde{\mathbf{H}}^{\text{inc}}}
\newcommand{\Etsc}{\tilde{\mathbf{E}}^{\text{sc}}}
\newcommand{\Htsc}{\tilde{\mathbf{H}}^{\text{sc}}}
\newcommand{\Et}{\tilde{\mathbf{E}}}
\newcommand{\Ht}{\tilde{\mathbf{H}}}
\newcommand{\Einc}{\mathbf{E}^{\text{inc}}}
\newcommand{\Hinc}{\mathbf{H}^{\text{inc}}}
\newcommand{\bh}{\mathbf{H}}
\newcommand{\bhsc}{\mathbf{H^{\text{sc}}}}
\newcommand{\bhinc}{\mathbf{H^{\text{inc}}}}
\newcommand{\bhint}{\mathbf{H^{\text{int}}}}
\newcommand{\bhext}{\mathbf{H^{\text{ext}}}}
\newcommand{\bhinte}{\mathbf{H_{\text{e}}^{\text{int}}}}
\newcommand{\bhexte}{\mathbf{H_{\text{e}}^{\text{ext}}}}
\newcommand{\bj}{\mathbf{J}} 
\newcommand{\bbm}{\mathbf{M}}
\newcommand{\bjs}{\mathbf{j}} 
\newcommand{\bbms}{\mathbf{m}}
\newcommand{\mk}{\mathcal{K}}
\newcommand{\mn}{\mathcal{N}}
\newtheorem{theorem}{Theorem}[section]
\newtheorem{lemma}[theorem]{Lemma}
\newtheorem{corollary}[theorem]{Corollary}
\newtheorem{remark}{Remark}
\newtheorem{definition}{Definition}
\title{Robust integral formulations for electromagnetic scattering 
 from three-dimensional cavities} 
\numberwithin{equation}{section}
\author{Jun Lai\footnote{Courant Institute, New York University, New
    York, NY. Email: {\tt lai@cims.nyu.edu}}, \,
Leslie Greengard\footnote{Courant Institute, New York University,
  and Simons Center for Data Analysis, Simons Foundation, New York, NY. Email:
  {\tt greengard@cims.nyu.edu}}, \,
   and 
Michael O'Neil\footnote{Courant Institute and Tandon School of
  Engineering,  New York University, New
    York, NY. Email: {\tt oneil@cims.nyu.edu}}
}
\date{\today}
\begin{document}

\maketitle 
\begin{abstract}
  Scattering from large, open cavity structures is of importance in a
  variety of electromagnetic applications. In this paper, we propose a
  new well conditioned integral equation for scattering from general
  open cavities embedded in an infinite, perfectly conducting
  half-space. The integral representation permits the stable
  evaluation of both the electric and magnetic field, even in the
  low-frequency regime, using the continuity equation in a
  post-processing step. We establish existence and uniqueness results,
  and demonstrate the performance of the scheme in the
  cavity-of-revolution case. High-order accuracy is obtained using a
  Nystr\"om discretization with generalized Gaussian quadratures.
\end{abstract}

\onehalfspacing

\section{Introduction}
\label{sec_intro}

The computation of electromagnetic wave propagation in the presence of
large, open cavities is an important modeling task.  It is critical,
for example, in understanding the effect of exhaust nozzles and engine
inlets on aircraft, as well as surface deformations in automobiles and
other land-based vehicles~\cite{GW1,GJP,PLA15,JIN2,JL2014}. The
presence of such structures plays a dominant role in both the near field,
where electromagnetic interference is of concern, and in the far field,
where the radar cross-section can be used for identification and
classification (including stealth-related calculations).  Fast
and accurate solvers to simulate such scattering phenomena are essential
for both design optimization and verification.

A variety of numerical methods have been proposed to solve such
scattering problems. Largely speaking, they fall into two categories.
The first is direct numerical simulation using finite
difference~\cite{GW1}, finite element~\cite{JIN2},
mode-matching~\cite{GLin} and boundary integral
methods~\cite{GW1,Perez-Arancibia2014}.  The second is asymptotic
methods, including Gaussian beam approximations~\cite{GBS} and
physical optics-based schemes~\cite{SHT}. The latter methods tend to
work well at very high frequencies in the absence of multiple
near-field scattering events, and are generally not well suited for
high-precision calculations in geometrically complex environments.
Solving the governing Maxwell equations using finite difference and
finite element methods, on the other hand, requires the discretization
of an unbounded domain.  In practice, these methods must either employ
approximate outgoing boundary conditions to mimic the radiation
condition at infinity, or be coupled with a boundary integral
representation beyond some distance so that the radiation condition is
satisfied exactly.  In the present work, we will focus on boundary
integral equation methods since they are free from grid-based
numerical dispersion, can achieve high-order accuracy in complex
geometry, and require degrees of freedom only on the boundary of the
scatterer itself, greatly reducing the number of unknowns.  The
Green's function used to represent the solution satisfies the outgoing
(radiation) condition exactly.  Existing integral representations for
cavity problems, however, generally yield integral equations of the
first-kind~\cite{HGA2}.  First-kind equations can lead to
ill-conditioned discrete linear systems, especially if substantial
mesh refinement is required. Refined meshes may be needed, for
example, to resolve geometric singularities.  Furthermore, several
existing formulations also suffer from \emph{spurious
  resonances}, including those of mixed first-/second-kind
systems~\cite{Perez-Arancibia2014}.  Finally, because of the nature of
the dyadic Green's function for the electric field,
standard
methods based on discretizing the physical electric current 
also suffer from low-frequency breakdown~\cite{EG10,ZC2000}. This
behavior is discussed in more detail below.

In this paper, we propose a new representation of the scattered field
that leads to a well-posed (resonance-free)  integral
equation which is immune from low-frequency breakdown.  This allows
for a stable numerical discretization along arbitrarily adaptive
meshes. In our numerical examples, using the 
the fact that the scatterer (i.e. the cavity) is axisymmetric 
permits us to use
separation of variables in cylindrical coordinates, applying the
Fourier transform in the angular (azimuthal) variable.
This procedure leads to a
sequence of uncoupled two-dimensional boundary integral equations on
the generating curve that 
defines the cross-section of the boundary of the scatterer
(see Fig.~\ref{figure2}).  
There are various numerical technicalities associated with
implementing body-of-revolution integral equation solvers, and 
  we do not seek to review the substantial
literature here. We instead refer the reader
to~\cite{HK2014,Kucharski2000,Hao2015,YHM2012,Liu2015} and the
references therein. A concise overview of the discretization and
resulting solver is given in Section~\ref{sec_fourier}. 
 Similar high-order techniques have been
applied to solve the Helmholtz equation on surfaces of
revolution~\cite{HK2014,Hao2015,YHM2012,Liu2015} and the full Maxwell
equations (for closed-cavity resonance problems) in~\cite{HK15}.

Due to the applicability of cavity scattering in physics and engineering,
there has been much work dedicated to both the mathematical and numerical
aspects of the problem.
The well-posedness
of the (forward) scattering problem is discussed in~\cite{HGA1,HGA2}
in the case of the two-dimensional problem, and in~\cite{HGA3} for 
the three-dimensional case. 
The paper~\cite{GKZ} provides the explicit dependence of the scattered field 
on the wavenumber in the high-frequency context. In~\cite{GJP,Li2012}, 
the authors studied uniqueness and stability issues for the inverse problem,
where one seeks to recover the shape of an unknown cavity using 
near-field data.
The corresponding optimal design problem, i.e. to find a cavity shape
that minimizes the radar cross section, under certain constraints, 
is discussed in~\cite{BJL1,BJL2} in the two-dimensional setting.

An outline of the paper follows: Section~\ref{sec_form} provides a
detailed introduction to the problem of scattering from an open cavity
and proposes an integral representation that leads to a well conditioned
integral formulation. In Section~\ref{sec_prof}, we prove that this
integral equation has a unique solution for a given incident field.
In Section~\ref{sec_lowfreq}, we show how to avoid low-frequency
breakdown merely by the use of various vector identities and physical
considerations. In Section~\ref{sec_fourier}, we briefly discuss the
separation of variables solver for axisymmetric cavities, and then
illustrate its accuracy and stability in Section~\ref{sec_numeri}.
Section~\ref{sec_con} contains a brief discussion of open problems and
some concluding remarks.

\section{Mathematical formulation of the scattering problem}
\label{sec_form}

\begin{figure}[!t]
\center
\includegraphics[width=5in]{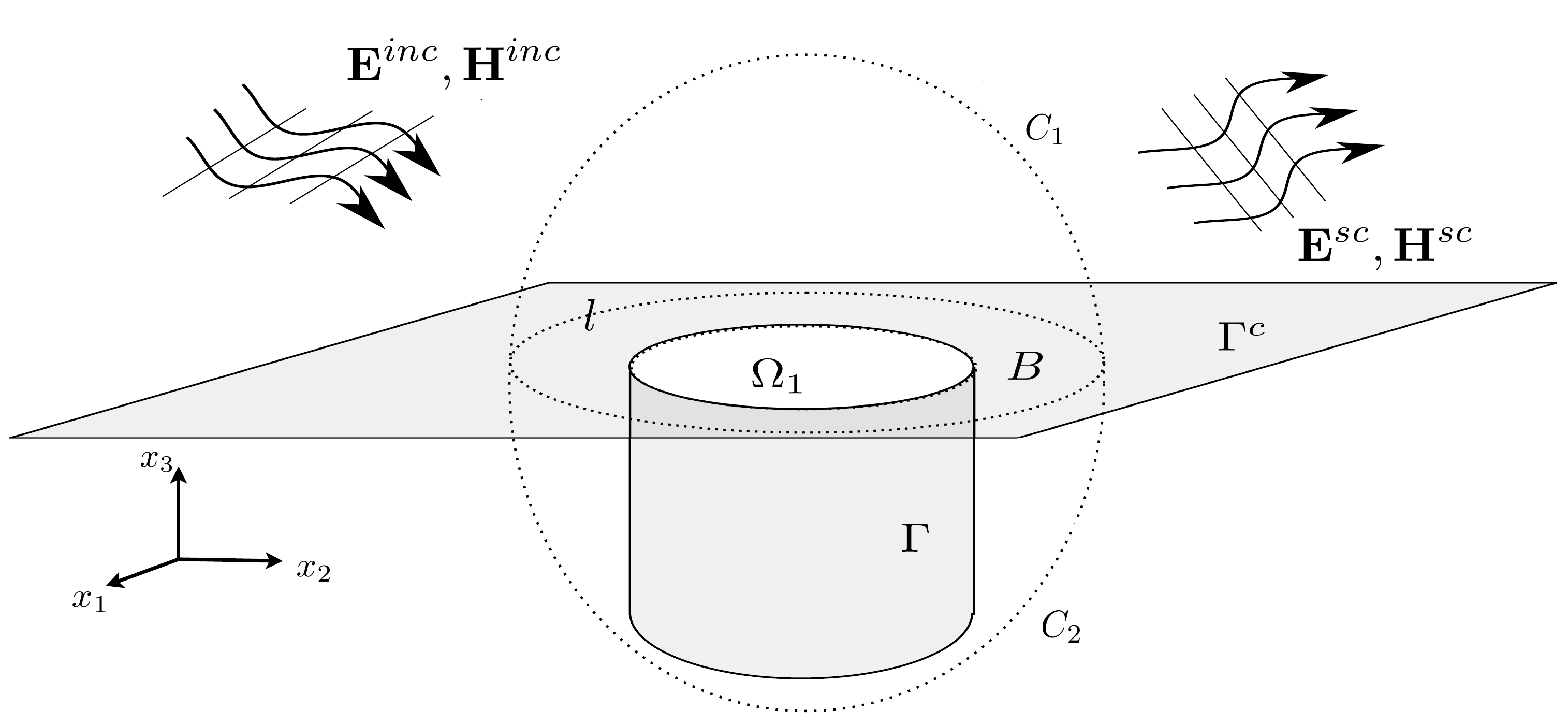}
\caption{A cavity $\Omega_1$ in a perfectly conducting
half-space $x_3 \geq 0$, with boundary $\Gamma$. The surfaces $C_1$ and
$C_2$ define the two halves of a sphere $C$ which is
sufficiently large to contain
the cavity and a finite buffer region, denoted by $B$, in the
$x_1x_2$-plane beyond the edge of the cavity.
The curve $\ell$ denotes the outer edge of the buffer region $B$. 
The unbounded half-space
boundary outside of $C$ is denoted by $\Gamma^c$.}\label{figure1}
\end{figure}

Suppose now that a perfectly conducting cavity $\Omega_1$ extends into the 
lower half-space $x_3 < 0$, as depicted in Figure~\ref{figure1}. See
the caption of Figure~\ref{figure1} for a description of the
geometrical setup.
The region in the lower half-space with boundary $\Gamma \cup B \cup
\Gamma^c$
is assumed to be perfectly conducting.
Given a time harmonic incident 
field $(\Etinc,\Htinc)$
 with an implicit time dependence of 
$e^{-i\omega t}$, we seek to find the scattered
field $(\Etsc,\Htsc)$ so that the total
field 
\[
\Et = \Etinc + \Etsc, \quad
\Ht = \Htinc + \Htsc
\]
satisfies the the Maxwell equations
\begin{equation*}
\begin{aligned}
\nabla \times \Et - i\omega \mu \, \Ht &= 0,  \\
\nabla \times \Ht  + i\omega \varepsilon \, \Et &= 0
\end{aligned}
\end{equation*} 
for $\bx \in \mathbb{R}^{3+}\cup \Omega_1$.
The material parameters are given by $\varepsilon$,  the 
electric permittivity, and $\mu$,  the magnetic permeability. 
Assuming $\varepsilon$ and $\mu$ are constant, 
it is convenient to denote suitably normalized 
fields by  
$\be = \sqrt{\varepsilon} \Et$, $\bh = \sqrt{\mu}\Ht$, etc.,
 leading to a simpler form of Maxwell's equations
\begin{equation}\label{maxequ}
\begin{aligned}
\nabla \times \be - ik \, \bh &= 0,  \\
\nabla \times \bh + ik \, \be &= 0,
\end{aligned}
\end{equation} 
where $k= \omega \sqrt{\mu \varepsilon}$ is known as the wavenumber. 
We assume that $\Re(k) >0$ and $\Im{(k)}\ge 0$. For perfect conductors,  
it is well-known~\cite{papas}
that the tangential electric field must satisfy the boundary conditions 
\begin{equation}\label{perfectbd}
\bn \times \be = \bzero \quad \text{on } \Gamma^c\cup \Gamma,
\end{equation} 
where $\bn$ is the interior normal direction along $\Gamma^c\cup \Gamma$.
The scattered field must also satisfy the Silver-M\"uller 
radiation condition: 
\begin{equation}
\lim_{|\bx|\rightarrow \infty} \frac{1}{|\bx|}
\left( \bhsc \times \frac{\bx}{|\bx|} - \besc  \right) = 0.  
\end{equation}

\begin{remark}
In the following derivations and computations, 
we assume that the incident field $(\beinc,\bhinc)$ is defined so that
it satisfies not only Maxwell's equations, but also the tangential
boundary condition
$\bn\times \beinc = \bzero $ on the entire half-space $x_3 = 0$. 
This is easy to accomplish by reflection and discussed in more detail in
Section~\ref{sec_numeri}.
\end{remark}

Before discussing the solution of the cavity problem itself,
we introduce some necessary notation.
Given a tangential vector field $\bjs$ along some surface $\Gamma$, the 
{\em vector potential} is defined by the single-layer potential
\begin{equation}
\cS_{\Gamma} \bjs (\bx) = \int_{\Gamma} G(\bx,\by) \, \bjs(\by) \, dA_y,
\end{equation}
where $G(\bx,\by)$ is the Green's function for the three-dimensional 
Helmholtz equation 
\begin{equation}
G(\bx,\by) = \frac{e^{ik|\bx-\by|}}{4\pi |\bx-\by|}.
\end{equation}
It is well known~\cite{papas} that in the case where $\bjs=\bj$, a
physical electric current, 
then the corresponding electric and magnetic fields generated by $\bj$ are
given by 
\begin{equation} \label{vectpot1}
\begin{aligned}
\be &= -\frac{1}{ik} \nabla \times \nabla \times
\cS_{\Gamma}\bj,\\
\bh &=  \nabla \times \cS_{\Gamma}\bj.
\end{aligned}
\end{equation}
Likewise, if $\bjs=\bbm$, a surface \emph{magnetic current}, then 
the electric and magnetic fields induced by $\bbm$ are given by the 
{\em vector anti-potentials}
\begin{equation}\label{vectpot2}
\begin{aligned}
\be^{\text a} &= \nabla \times \cS_{\Gamma}\bbm, \\
\bh^{\text a} &= \frac{1}{ik} \nabla \times \nabla \times
\cS_{\Gamma}\bbm.
\end{aligned}
\end{equation}
Due to the linearity of Maxwell's equations, any linear combination of 
electric current-like variable $\bjs$ and magnetic current-like
variable $\bbms$ will generate a Maxwellian field 
\begin{equation}
\begin{aligned}
 \be(\bjs,\bbms) &= \alpha \be(\bjs) + \beta \be^{\text a}(\bbms), \\
\bh(\bjs,\bbms) &= \alpha \bh(\bjs) + \beta \bh^{\text a}(\bbms) .
\end{aligned}
\end{equation}
Only when $\beta = 0$ does $\bjs$ correspond directly to the
\emph{physical} electric current. In order to develop a well-conditioned
integral equation for the cavity problem, 
we will make use of both potentials and anti-potentials in the representation.
Boundary conditions will then determine the values of $\bjs$ 
and $\bbms$. Such an approach is 
sometimes called the \emph{indirect-method}
 since the unknowns are not the fields themselves.

\subsection{A simpler problem: The bump}

It is first worth considering the simpler
problem where the defect in the half-space boundary is a
compactly-supported
\emph{bump} 
instead of a cavity (Fig.~\ref{figure1a}). 
This problem can be solved using standard integral equations and
the method of images.
We need only satisfy boundary 
condition~\eqref{perfectbd}, which we write in the form:
\begin{equation}
\label{bumpbc}
\bn\times \besc = -\bn \times \beinc.
\end{equation}

Since, by assumption, $\bn \times \beinc =\mathbf{0}$ away from the bump, 
we need only satisfy condition~\eqref{bumpbc} 
on the bump itself as long as we can construct
a representation for $\besc$ that satisfies 
$\bn \times \besc =\bzero $ away from the bump. To this end, we can define
\begin{equation}\label{vecpotim}
\besc = \nabla \times \cS_{\Gamma}\bbm + \nabla \times \cS_{\Gamma_R} \bbm_R
\end{equation}
and
\begin{equation}
\bhsc = \frac{1}{ik} \nabla \times \nabla \times \cS_{\Gamma}\bbm +
\frac{1}{ik} \nabla \times \nabla \times \cS_{\Gamma_R}\bbm_R.
\end{equation}
Here, $\Gamma_R$ is the reflection of the bump $\Gamma$ 
across the $x_1x_2$-plane.
If at a point $\bx = (a,b,c) \in \Gamma$, the magnetic current $\bbm$
is given by \mbox{$\bbm{(\bx)} = (m_1,m_2,m_3)$}, then
its image point on $\Gamma_R$ is $\bx' = (a,b,-c)$ and the image 
current is defined as $\bbm_R{(\bx')} = (m_1,m_2,-m_3)$.
It is straightforward to verify from~\eqref{vecpotim} that
$\bn \times \besc =\bzero$ away from the bump (i.e., wherever $\bn =
(0,0,1)$. 
From standard jump condition relations, taking the limit
of~\eqref{vecpotim}  
to the boundary, it remains only to solve
the boundary integral equation for $\bbm$:
\[ \frac{1}{2} \bbm + \bn \times \nabla \times \cS_{\Gamma}\bbm 
+ \bn \times \nabla \times \cS_{\Gamma_R}\bbm_R = - n \times \beinc
\]
on $\Gamma$, where the integral operators are understood in their
principal value sense. This is a second-kind (although not
resonance-free) integral equation for a smooth bump.

\begin{figure}
\center
\includegraphics[width=5in]{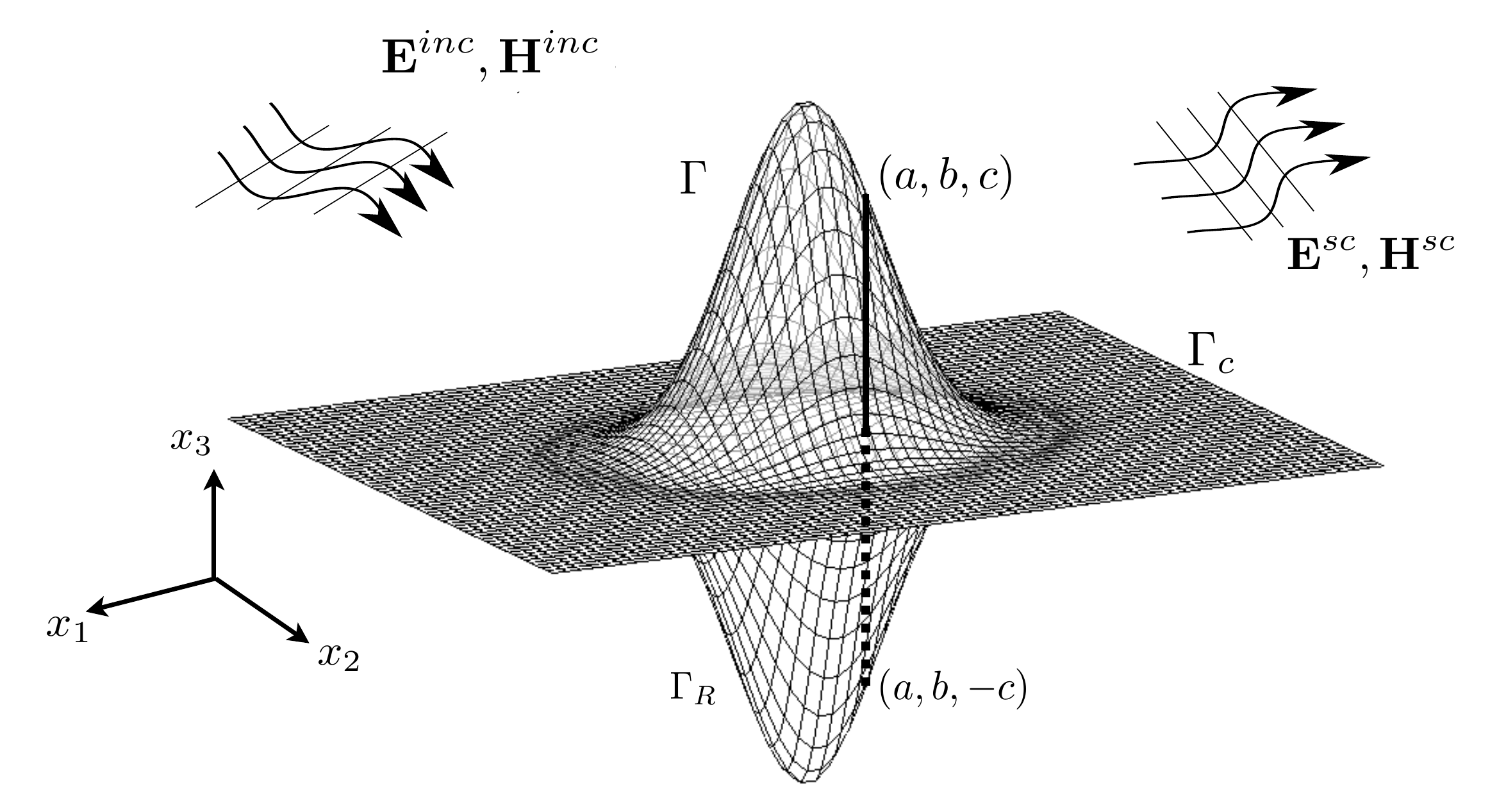}
\caption{A bump $\Gamma$ on a perfectly conducting
half-space $x_3 \geq 0$. $\Gamma_R$ denotes the reflection of the 
bump across the $x_1x_2$-plane. 
The unbounded half-space
boundary beyond $\Gamma$ is denoted by $\Gamma_c$.}\label{figure1a}
\end{figure}

\subsection{The cavity case}

Unfortunately, in the case of cavity deviations from a half-space, 
a more complicated representation
is required. In order to make use of the method of images,
following the approach of~\cite{JL2014}, we introduce an artificial
boundary $C_1$ that covers the cavity, as shown in 
Fig.~\ref{figure1}. The boundary $C_1$ must be sufficiently 
large so that its reflection
$C_2$ across the $x_1x_2$-plane does not intersect the cavity.
The domain $\mathbb{R}^{3+}\cup \Omega_1$ is now decomposed into two 
sub-domains. 

\begin{definition}
In a slight abuse of notation, we will continue to 
use $\Omega_1$ to denote the {\em interior} domain, bounded by
$\Gamma$, $B$ and $C_1$. 
We will refer to the upper half-space outside 
of $\Omega_1$ as the {\em exterior} domain.
\end{definition}

In the remainder of this paper, we will denote by 
$(\beint,\bhint)$ and $(\beext,\bhext)$ the scattered fields
in the interior and exterior domains, respectively.
The exterior scattered field 
$(\beext,\bhext)$ must satisfy Maxwell's equations~\eqref{maxequ}
in $\mathbb{R}^{3+}\backslash \Omega_1$, together with the 
boundary condition
\begin{equation}\label{halfspacebd}
\bn \times \beext = \bzero \quad \text{on } \Gamma^c
\end{equation}
and the interface (or transmission) conditions on $C_1$:
\begin{equation}
\begin{aligned}
\bn\times \beext &= \bn\times \beint, \\
\bn\times \bhext &= \bn\times \bhint.
\end{aligned}
\end{equation}

\begin{remark}
Since the domain $\Omega_1$ contains edges, the fields
$\beint$ and $\bhint$ in $\Omega_1$
are defined in the Sobolev space
\[ H(curl,\Omega_1)=\{\mathbf{u}\in (L^2(\Omega_1))^3, 
\nabla \times \mathbf{u} \in (L^2(\Omega_1))^3 \},
\]
where $(L^2(\Omega_1))^3$ denotes the space of component-wise 
square-integrable vector fields in $\Omega_1$. 
Moreover, the resulting integral equations we will derive
are consisted by Fredholm operators of index zero in the trace space of $H(curl,\Omega_1)$ ,
for which the Fredholm alternative still applies, i.e. uniqueness
implies existence. We will omit the proof but refer the reader to~\cite{BCS2002,KH2015} for a detailed
discussion. 
\end{remark}

As in the case of the bump, above, we can ensure 
that~\eqref{halfspacebd} is satisfied using the method of images.
In particular, we now represent the exterior fields by
\begin{equation}\label{eq_rep_ext}
\begin{aligned}
\beext &= -\frac{1}{ik}\nabla\times\nabla\times \cS_{C_1}^H \bj 
+\nabla \times \cS_{C_1}^H \bbm +\nabla \times \cS_{B} \bbm, \\
\bhext &= \frac{1}{ik} \nabla  \times \beext \\
&= \nabla\times \cS_{C_1}^H \bj +\frac{1}{ik}\nabla
\times\nabla 
\times \cS_{C_1}^H \bbm +\frac{1}{ik}\nabla \times\nabla \times 
\cS_{B} \bbm ,
\end{aligned}
\end{equation}
where image fields are given by
\begin{equation}\label{eq_sreflect}
\begin{aligned}
\nabla\times\nabla\times \cS_{C_1}^H \bj &= 
\nabla\times\nabla\times \cS_{C_1} \bj 
+ \nabla\times\nabla\times \cS_{C_2} \bj_R, \\
 \nabla \times \cS_{C_1}^H \bbm & = \nabla \times \cS_{C_1} \bbm 
+ \nabla \times \cS_{C_2} \bbm_R.
\end{aligned}
\end{equation}
The surface currents $\bj_R$ and $\bbm_R$ on $C_2$ are the images of 
the currents $\bj$ and $\bbm$ on $C_1$. More specifically, 
given $\bx=(x,y,z) \in C_1$ and its image point $\bx' = (x,y,-z)\in C_2$, 
the image currents are defined by
\begin{equation}\label{image}
\begin{aligned}
\bj{(\bx')} &= (-j_1,-j_2,j_3)  &\quad &\text{if }\bj{(\bx)} 
= (j_1,j_2,j_3),\\
\bbm{(\bx')} &= (m_1,m_2,-m_3)  & &\text{if }\bbm{(\bx)} = (m_1,m_2,m_3).
\end{aligned}
\end{equation}
It is straightforward to verify that~\eqref{halfspacebd} 
is enforced by symmetry. Note that the last term in 
the electric field representation 
in~\eqref{eq_rep_ext} does not involve an image current.
However, observe that $B$ is part of the half-space boundary. 
It is easy to check
that for any point beyond $C_1$, sources defined on $B$ make no 
contribution to the tangential
electric field on $\Gamma^c$.

\begin{remark}\label{remark3}
  We have introduced a magnetic current on $B$ even though $B$ is not
  part of the boundary of the exterior domain. This representation
  leads to a cancellation of hypersingular terms in the integral
  equation at the triple-junction where $C_1$, $C_2$, and $B$
  intersect, and hence to a bounded integral operator. This technique,
  which includes layer potentials on \emph{extra} boundary components to
  alter the kernels in the integral equation is sometimes called the
  \emph{global density} technique. For a more detailed discussion,
  see~\cite{GL2012,GHL2014,JL2014}.
\end{remark}

For the interior scattered fields, we let
\begin{equation}\label{eq_rep_int}
\begin{aligned}
\beint &= -\frac{1}{ik}\nabla\times\nabla\times \cS_{C_1}^H
   \bj + \nabla \times \cS_{C_1}^H \bbm + \nabla \times 
\cS_{B\cup \Gamma} \bbm, \\
\bhint &= \frac{1}{ik}  \nabla \times \beint \\
&= \nabla\times \cS_{C_1}^H \bj 
+\frac{1}{ik}\nabla \times\nabla \times \cS_{C_1}^H \bbm
                  +\frac{1}{ik}\nabla 
\times\nabla \times \cS_{B\cup \Gamma} \bbm.
\end{aligned}
\end{equation}
Note that the only difference in the interior representation when
compared with the exterior representation is that we have included a
contribution from a magnetic current on the cavity~$\Gamma$. Although $C_2$, the image surface of $C_1$, is not part of the boundary of $\Omega_1$, its contribution is added to the interior scattered fields by the similar reason as in Remark \ref{remark3}. 

For convenience, given two surfaces $\Gamma_s$ and $\Gamma_t$, and the
current $\bj$ on $\Gamma_s$, we define the following two surface
vector potentials for $\bx\in \Gamma_t$ by
\begin{align}
\mk_{\Gamma_t,\Gamma_s}\bj(\bx) 
&= \bn(\bx)\times \frac{1}{ik} \nabla\times\nabla\times \cS_{\Gamma_s}
\bj(\bx), \\
\mn_{\Gamma_t,\Gamma_s}\bj(\bx) &= \bn(\bx)\times \nabla\times
                                    \cS_{\Gamma_s}\bj(\bx).
\label{mnkdef}
\end{align}
When $\Gamma_s$ has a reflection, we define 
$\mk^H_{\Gamma_t,\Gamma_s}\bj$ and $\mn^H_{\Gamma_t,\Gamma_s}\bj$ 
to include the contribution from the reflected image currents as well,
as in~\eqref{eq_sreflect}.

When $\Gamma_t=\Gamma_s=\Gamma$, the integral operators 
$\mk$ and $\mn$ become singular. More precisely, the integral 
in $\mk_{\Gamma,\Gamma}$ is hypersingular and defined in the Hadamard 
principal-value sense, while $\mn_{\Gamma,\Gamma}$ is defined in the 
Cauchy principal-value sense~\cite{Cot2}. In this case, the following
jump relations hold:
\begin{align}
\lim_{\bx\rightarrow \Gamma^{\pm}} \bn(\bx)\times \frac{1}{ik} 
\nabla\times\nabla\times \cS_{\Gamma}\bj(\bx) &= 
\mk_{\Gamma,\Gamma}\bj(\bx), \label{jump1}\\ 
\lim_{\bx\rightarrow \Gamma^{\pm}} \bn(\bx)\times 
\nabla\times \cS_{\Gamma}\bj(\bx)&= \frac{1}{2}\bj(\bx) \pm 
\mn_{\Gamma,\Gamma}\bj(\bx), \label{jump2}
\end{align}
where $\Gamma^{\pm}$ denotes the side that corresponds to the
outward ($+$) or inward ($-$) normal, respectively.

Using the previous integral representations for the interior and
exterior fields, the boundary value problem
\begin{equation}
\begin{aligned}
\nabla \times \be - ik \, \bh &= 0, & &\text{in } \Omega_1 \cup
\mathbb R^{3+},\\
\nabla \times \bh + ik \, \be &= 0,& &\text{in } \Omega_1 \cup
\mathbb R^{3+},\\
\bn\times \beext &= \bn \times \beint, &\qquad &\text{on } C_1, \\
\bn\times \bhext  &= \bn \times \bhint,& &\text{on } C_1, \\
\bn\times \beint &=-\bn\times \beinc & &\text{on } B \cup \Gamma,
\end{aligned}
\end{equation} 
along with jump conditions~\eqref{jump1} and~\eqref{jump2},
immediately  yields a system of 
 integral equations for
$\bj$ and $\bbm$:
\begin{equation}\label{equsys}
\begin{aligned}
\bbm - \mn_{C_1,\Gamma}\bbm  &= \bzero & \qquad&\text{on } C_1, \\
\bj - \mk_{C_1,\Gamma}\bbm &=  \bzero &  &\text{on } C_1, \\
\frac{1}{2}\bbm + \mn_{B, \Gamma}\bbm &= \bzero & &\text{on } B, \\
\frac{1}{2}\bbm - \mk^H_{\Gamma, C_1}\bj+
\mn^H_{\Gamma, C_1}\bbm+ \mn_{\Gamma, B\cup \Gamma}\bbm &=
-\bn\times \beinc 
& &\text{on } \Gamma.
\end{aligned}
\end{equation}
Due to the existence of corners, the integral system \eqref{equsys} is not second kind Fredholm equation. Nevertheless, the system is well conditioned once the corner singularity is well resolved numerically.

\begin{remark}
Given the magnetic current $\bbm$ along $\Gamma$, 
the first three equations the system~\eqref{equsys} are explicitly 
solvable because they only involve the application of an off-surface
layer potential.
Therefore, our formulation can be 
reduced to an unknown magnetic current~$\bbm$ along~$\Gamma$ only. 
In particular, by substitition of the first three equations
in~\eqref{equsys} into the fourth equation in~\eqref{equsys},
we have the integral equation
\begin{equation} \label{equonlyM}
\frac{1}{2}\bbm - \mk^H_{\Gamma,
  C_1}\mk_{C_1,\Gamma}\bbm+
\mn^H_{\Gamma, C_1}\mn_{C_1,\Gamma}\bbm - 2 
\mn_{\Gamma,B}\mn_{B, \Gamma}\bbm
+ \mn_{\Gamma, \Gamma}\bbm= -\bn\times \beinc
\end{equation}
for $\bbm$ along $\Gamma$.
Not only does this observation reduce the number of unknowns
significantly, it will play a key role in avoiding low-frequency
breakdown in the electric field, as shown in 
Section~\ref{sec_lowfreq}.
\end{remark}

\section{Existence and uniquness}
\label{sec_prof}

Since the integral system~\eqref{equsys} consists of Fredholm
operators of index zero, by the Fredholm alternative, existence
follows from uniqueness.
This is given by the following theorem.

\begin{theorem}
Equation~\eqref{equsys} admits a unique solution $\bj$ and $\bbm$ for $k>0$. 
\end{theorem}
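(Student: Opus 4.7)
The plan is to invoke the Fredholm alternative recalled immediately before the theorem and reduce the claim to uniqueness: if $(\bj,\bbm)$ satisfies \eqref{equsys} with $\bn\times\beinc=\bzero$, then $\bj=\bbm=\bzero$. So suppose such a pair is given, and use the representations \eqref{eq_rep_int} and \eqref{eq_rep_ext} to define scattered fields $(\beint,\bhint)$ in $\Omega_1$ and $(\beext,\bhext)$ in $\mathbb{R}^{3+}\setminus\Omega_1$. These satisfy Maxwell's equations in their respective domains and the Silver--M\"uller radiation condition at infinity, and the image construction in \eqref{eq_sreflect}--\eqref{image} automatically enforces $\bn\times\beext=\bzero$ on $\Gamma^c$.

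Taking traces to the source surfaces via the jump relations \eqref{jump1}--\eqref{jump2}, the four homogeneous equations of \eqref{equsys} translate into tangential transmission of both $\be$ and $\bh$ across $C_1$ (from (i), (ii)) and PEC conditions $\bn\times\beint=\bzero$ on $B$ and $\Gamma$ (from (iii), (iv)). Because $C_1$ is an artificial smooth surface bearing no genuine material discontinuity, continuity of both tangential components together with Maxwell's equations on each side implies continuity of the full fields across $C_1$, so that $(\beint,\bhint)$ and $(\beext,\bhext)$ glue into a single Maxwell solution on the physical domain $\Omega_1\cup\mathbb{R}^{3+}$. That solution then satisfies the exterior PEC scattering problem with zero incident data, so by standard uniqueness of the forward problem for real $k>0$ \cite{HGA3} it must vanish identically.

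To conclude that the densities themselves vanish, I would first exploit the fact that equations (i), (ii), and (iii) of \eqref{equsys} express $\bbm|_{C_1}$, $\bj|_{C_1}$, and $\bbm|_B$ as explicit off-surface potential evaluations of $\bbm|_\Gamma$, so the problem collapses to showing $\bbm|_\Gamma=\bzero$. Near $\Gamma$ the representation \eqref{eq_rep_int} is smooth apart from the single term $\nabla\times\cS_\Gamma\bbm$, whose tangential jump equals $\bbm$ by \eqref{jump2}; since $\bn\times\beint|_{\Omega_1}=\bzero$, this forces the PEC-side tangential trace of $\beint$ to equal $\pm\bbm$. The principal obstacle, and where I expect the real work to lie, is to argue that this PEC-side trace vanishes as well. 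I would achieve it through the identity $\beint-\beext=\nabla\times\cS_\Gamma\bbm$, valid wherever both formulas are defined: together with the vanishing of $\beint$ in $\Omega_1$ and $\beext$ in $\mathbb{R}^{3+}\setminus\Omega_1$, and an auxiliary Maxwell uniqueness argument in the reflected/PEC region exploiting the symmetries of the image construction, this forces $\nabla\times\cS_\Gamma\bbm\equiv\bzero$ and hence $\bbm|_\Gamma=\bzero$. Back-substitution through (i), (ii), (iii) then gives $\bbm|_{C_1\cup B}=\bzero$ and $\bj|_{C_1}=\bzero$, completing the argument.
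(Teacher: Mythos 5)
Your overall architecture matches the paper's: reduce to uniqueness via the Fredholm alternative, define the fields from the representations, read off the homogeneous boundary/transmission conditions from the jump relations, and kill the fields in the physical domain $\Omega_1\cup\mathbb{R}^{3+}$ (the paper does this directly with Green's first vector identity plus Rellich's lemma rather than citing forward-problem uniqueness, but that is essentially the same content). The problem is the second half, which you yourself flag as ``where the real work lies'': you never actually show $\bbm|_\Gamma=\bzero$, and the route you sketch does not work as stated. The identity $\beint-\beext=\nabla\times\cS_\Gamma\bbm$ is true wherever both expressions are evaluated, but $\beint$ has been shown to vanish only in $\Omega_1$ and $\beext$ only in $\mathbb{R}^{3+}\setminus\Omega_1$; these regions are disjoint, so the subtraction never takes place on a common set where both are known to be zero. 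The region you actually need to control is $\Omega_2$, the reflection of $\Omega_1$ below the plane $x_3=0$, because $\bbm$ on $\Gamma$ is recovered as the \emph{jump} of $\bn\times\be$ across $\Gamma$, and one side of that jump lives in $\Omega_2$, where nothing has yet been established. Vanishing of the field in $\Omega_1$ alone only tells you that the $\Omega_2$-side trace equals $\bbm$; it does not tell you that trace is zero.

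The paper closes this gap with two further steps that your ``auxiliary Maxwell uniqueness argument'' would have to reproduce in detail. First, it derives the boundary conditions \eqref{cc} satisfied by $(\be,\bh)$ in $\Omega_2$: using $\be=\bh=\bzero$ in $\Omega_1$ together with the jump relations one gets $\bn\times\be=\bbm$ and, crucially, $\bn\times\bh=\bzero$ on $\Gamma\cup B$, while on $C_2$ the symmetry of the image currents \eqref{image} and the relations \eqref{c34} (the reflections of the first two equations of \eqref{equsys}) give explicit expressions for both tangential traces. Second, it extends $(\be,\bh)$ from $\Omega_2$ to all of $\mathbb{R}^{3-}\setminus\Omega_2$ by a \emph{new} representation built from $\cS_{\Gamma}\bbm$ and $\cS_{\Gamma'}\bbm'$, verifies $\bn\times\bh=\bzero$ on $\Gamma^c$, and reruns the Green's-identity/Rellich argument with the roles of $\be$ and $\bh$ exchanged (a magnetic-wall problem) to conclude $\be=\bh=\bzero$ in $\Omega_2$, whence $\bbm=\bn\times\be=\bzero$ on $\Gamma$ and then, by back-substitution into \eqref{equsys}, $\bj=\bbm=\bzero$ on $C_1$ and $B$. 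Without this construction --- in particular without establishing the magnetic boundary condition $\bn\times\bh=\bzero$ on $\Gamma\cup B$ and the matching conditions on $C_2$ --- your argument stops short of the conclusion.
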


\begin{proof} 
It suffices to show that the system~\eqref{equsys} has only the
trivial solution for $k>0$.
For this, let us denote by
$\Omega_2$ the region bounded by $C_2$, $\Gamma$ and $B$. 
The proof involves three steps.

First, for $\bj$ and $\bbm$ given in~\eqref{equsys}, define the induced
electromagnetic field $(\be,\bh)$ 
for $\bx \in \mathbb{R}^3\backslash(\Omega_1\cup \Omega_2)$ by
\begin{equation}\label{defrep1}
\begin{aligned}
  \be &= -\frac{1}{ik}\nabla\times\nabla\times \cS_{C_1}^H\bj +\nabla
  \times \cS_{C_1}^H\bbm +\nabla \times \cS_{B}\bbm, \\
  \bh &= \nabla\times \cS_{C_1}^H\bj +\frac{1}{ik}\nabla \times\nabla
  \times \cS_{C_1}^H\bbm +\frac{1}{ik}\nabla \times\nabla \times
  \cS_{B}\bbm,
\end{aligned}
\end{equation}
 and for $\bx\in \Omega_1\cup \Omega_2$,
\begin{equation}\label{defrep2}
\begin{aligned}
\be &= -\frac{1}{ik}\nabla\times\nabla\times \cS_{C_1}^H\bj 
+ \nabla \times \cS_{C_1}^H\bbm + \nabla \times \cS_{B\cup \Gamma}\bbm,   \\
\bh &= \nabla\times \cS_{C_1}^H\bj +\frac{1}{ik}\nabla \times\nabla
\times 
\cS_{C_1}^H\bbm +\frac{1}{ik}\nabla \times\nabla \times \cS_{B\cup \Gamma}\bbm.
\end{aligned}
\end{equation}
From~\eqref{equsys}, we have that
\begin{equation}
\lim_{{\substack{\bx\rightarrow \Gamma^c\\ 
\bx\in \mathbb{R}^3\backslash(\Omega_1\cup 
\Omega_2)}}} \bn \times \be = 0, \qquad 
\lim_{\substack{\bx\rightarrow B\cup \Gamma\\\bx \in \Omega_1}} 
\bn \times \be = 0 
\end{equation}
and 
\begin{equation}\label{concond}
[\bn\times\be] = 0, \quad [\bn\times \bh] = 0 \quad \text{on } C_1,
\end{equation}
where $[\cdot]$ denotes the jump in the corresponding the field.

Let $D\subset \mathbb{R}^{3+}$ be a finite region that is bounded 
by a sufficiently large hemisphere $\partial D$
 and the surfaces $\Gamma^c$ and $C_1$. 
Applying Green's first vector identity~\cite{Cot2} to $\be$ and its complex 
conjugate $\overline{\be}$ in $D$, and recalling the fact that 
$\nabla\cdot \be =0$ in this region yields 
\begin{equation} \label{green1}
\int_{\partial D \cup C_1} (\bn \times \be) \cdot (\nabla \times
\overline{\be})
\, dA = \iint_D \left( |\nabla\times \be|^2-k^2|\be|^2 \right) \, dV .
\end{equation}
Similarly, applying the same identity to $\be$ in $\Omega_1$, we have
\begin{equation}
\label{green2}
\int_{ C_1\cup \Gamma} (\bn \times \be) \cdot (\nabla \times
\overline{\be})
 \, dA
= \iint_{\Omega_1}\left( |\nabla\times \be|^2-k^2|\be|^2 \right) \, dV.
\end{equation} 
Note that $\bn$ in~\eqref{green1} and~\eqref{green2} denotes the 
exterior normal with respect to the regions $D$ and $\Omega_1$, and
that the right hand side of both relations is purely real (for real $k$).
Combining~\eqref{green1} and~\eqref{green2}, using the continuity
condition~\eqref{concond} along $C_1$, and taking imaginary parts 
we have
\begin{equation}\label{equimag}
\Im{\int_{\partial D} \left( \bn\times \be \right) \cdot 
\left( \nabla \times \overline{\be}   \right)} = \Im{\int_{\Gamma} 
\left( \bn\times \be \right) \cdot \left( \nabla \times \overline{\be}
\right) } = 0, 
\end{equation}
where $\Im$ denotes the imaginary part. Lastly, since $\bn \times \be
= 0$  on $\Gamma$, relation~\eqref{equimag}
implies that the electric field
 is identically zero in $D$ by Rellich's lemma~\cite{Cot2}. 
By analytic continuation from $C_1$ into $\Omega_1$,
we also obtain $\be=0$ and $\bh=0$ in $\Omega_1$.

The second step in the proof is to show 
that the fields $(\be,\bh)$ in $\Omega_2$ satisfy 
the following boundary conditions
\begin{equation}\label{cc}
\begin{aligned}
\bn \times \be & = \bbm & \quad &\text{on } \Gamma \cup B, \\
\bn \times \bh & = \mathbf{0} & & \text{on } \Gamma\cup B, \\
\bn \times \be & = \mn_{C_2,\Gamma'}\bbm+\mn_{C_2,\Gamma}\bbm & &
 \text{on } C_2, \\
\bn\times \bh & = \mk_{C_2,\Gamma'}\bbm+\mk_{C_2,\Gamma}\bbm & & 
 \text{on } C_2  .
\end{aligned}
\end{equation}
Here, just to clarify, $\bn$ on $\Gamma \cup B$ is the exterior normal
with respect to $\Omega_1$ and $\bn$ on $C_2$ is the exterior normal
with respect to $\Omega_2$. The surface $\Gamma'$ 
is the image of $\Gamma$
with respect to the $x_1x_2$-plane, and $\bbm$ on $\Gamma'$ is the
corresponding image magnetic
current. By~\eqref{defrep1},~\eqref{defrep2}, and the jump
relations~\eqref{jump1},~\eqref{jump2}, we have that
\begin{equation}
[\bn\times \be] = -\bbm \quad \text{and} \quad 
[\bn \times \bh ] = 0  \quad  \text{on } \Gamma \cup B .
\end{equation}
Using the fact that $\be =\mathbf{0}$ and $\bh=\mathbf{0}$ 
in $\Omega_1$, we obtain the 
desired boundary conditions~\eqref{cc} 
on $\Gamma\cup B$ for the fields $(\be,\bh)$ in $\Omega_2$.

Turning to the boundary $C_2$, we have the jump conditions
\begin{equation}\label{c12}
[\bn\times \be] = -\bbm' + \mn_{C_2,\Gamma}\bbm \quad \text{and} \quad
[\bn\times \bh] = -\bj' +  \mk_{C_2,\Gamma}\bbm \quad \text{on } C_2. 
\end{equation} 
Here, $\bj'$ and $\bbm'$ are the image surface currents on $C_2$.
By equation~\eqref{equsys} and symmetry,
\begin{align}\label{c34}
\bbm' = \mn_{C_2, \Gamma'}\bbm' \quad \text{and} \quad 
\bj' = \mk_{C_2, \Gamma'}\bbm' \quad \text{on } C_2,
\end{align}
where $\bbm'$ on $\Gamma'$ is again the image magnetic current 
of $\bbm$ on $\Gamma$.
Combining~\eqref{c12} and~\eqref{c34}, we obtain the remaining 
boundary conditions in~\eqref{cc}. 

The third, and final, step in the proof is to show that
the electromagnetic field satisfying~\eqref{cc} is 
identically zero. 
For this, extend $\be$ and $\bh$ from $\Omega_2$ to 
$\mathbb{R}^{3-}\backslash\Omega_2$ by letting
\begin{equation}
\begin{aligned}
\be & = -\nabla\times \cS_{\Gamma'}\bbm' + \nabla\times \cS_{\Gamma}\bbm  \\
\bh & = -\frac{1}{ik}\nabla\times\nabla\times \cS_{\Gamma'}\bbm'+
\frac{1}{ik}\nabla\times\nabla\times \cS_{\Gamma}\bbm .
\end{aligned}
\end{equation}
It is easy to see that $(\be,\bh)$ 
is an electromagnetic field satisfying
\begin{equation}  
\bn\times \bh = \bzero \quad \text{on } \Gamma^c .
\end{equation} 
Following the same argument as in the first step 
(with $\be$ and $\bh$ exchanged), 
we must have $\be =\mathbf{0}$ and $\bh =\mathbf{0}$ 
in $\Omega_2$, which implies $\bbm=\mathbf{0}$ on $\Gamma$. Therefore,
by~\eqref{equsys}, we obtain $\bj = \mathbf{0}$ and $\bbm=\mathbf{0}$ 
on $C_1$.\\
\end{proof}

A similar proof can be obtained in the case where $\Re (k)>0$ and
$\Im (k)>0$. In the static case where $k=0$, our representation is not valid and must be altered. However, the existence and uniqueness can be handled directly by electro- and magneto-static arguments.

\section{Low-frequency breakdown}
\label{sec_lowfreq}

It is clear that the integral
representations~\eqref{eq_rep_ext} and~\eqref{eq_rep_int}
are numerically unstable as $k\rightarrow 0 $ due
to the explicit $1/ik$ scaling.
This problem is not intrinsic to the Maxwell system~\eqref{maxequ},
where the electric and magnetic field simply uncouple in the static
limit.  Rather, it is due to the use of vector currents $\bj$, $\bbm$
as the unknowns. 
The resulting loss of precision is generally referred to as
\emph{low-frequency breakdown}~\cite{ZC2000}.  Rather than develop a
new mathematical formalism to overcome this, as in \cite{EG10,EG13},
we modify our method described above to create a representation that
is stable as $k\rightarrow 0 $.  In short, using properties of the
electromagnetic fields and vector identities, we are able to express
the electric field only in terms of $\bbm$, and in the process,
eliminate the terms which scale as $1/ik$.
The magnetic field representation still formally suffers from
low-frequency breakdown, but numerically it is relatively benign as
$k\to 0$.

In order to express $\be$ only in terms of well-scaled operators of
$\bbm$, we require the following identity.

\begin{lemma}\label{lemma1}
Let $\Gamma$ be an open surface with smooth boundary $\ell$, and let
$\bj$ be a smooth tangential vector field along~$\Gamma$. Then
\begin{equation}
\frac{1}{ik}\nabla\times\nabla\times \cS_{\Gamma}\bj 
= -ik\cS_{\Gamma}\bj +\nabla \cS_{\Gamma}
\left(\frac{\nabla_{\Gamma}\cdot \bj }{ik}\right) 
-\nabla \cS_{\ell}\left(\frac{\bj\cdot \bbb}{ik}\right)
\end{equation}
where $\bbb = \btau \times \bn$ is the 
outward bi-normal vector along $\ell$, with $\btau$ the unit 
tangent vector along~$\ell$ and~$\bn$ the surface normal (oriented so
that $\bbb$ points away from the surface).
\end{lemma}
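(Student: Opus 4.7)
The plan is to reduce the double curl to the standard Helmholtz identity and then push the remaining divergence onto the surface via an integration by parts; because $\Gamma$ is open, the integration by parts produces a line integral along $\ell$, which is exactly the last term on the right-hand side.

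First I would invoke the vector identity $\nabla\times\nabla\times\mathbf{A} = \nabla(\nabla\cdot\mathbf{A}) - \nabla^{2}\mathbf{A}$ applied to $\mathbf{A}=\cS_{\Gamma}\bj$. Off $\Gamma$, each Cartesian component of $\cS_{\Gamma}\bj$ satisfies the Helmholtz equation $(\nabla^{2}+k^{2})\cS_{\Gamma}\bj = 0$ because $G$ is the free-space Green's function. Hence
\begin{equation*}
\frac{1}{ik}\nabla\times\nabla\times\cS_{\Gamma}\bj
= -ik\,\cS_{\Gamma}\bj \;+\; \nabla\!\left(\frac{\nabla\cdot\cS_{\Gamma}\bj}{ik}\right),
\end{equation*}
which already isolates the first term in the claim and reduces the problem to identifying $\nabla\cdot\cS_{\Gamma}\bj$.

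Next I would compute $\nabla_{\bx}\cdot\cS_{\Gamma}\bj(\bx)$ by moving the gradient onto the integrand and swapping $\nabla_{\bx}G=-\nabla_{\by}G$. Since $\bj$ is tangential to $\Gamma$, the resulting integrand $\nabla_{\by}G\cdot\bj$ depends only on the surface gradient $\nabla_{\Gamma,\by}G$. A surface integration by parts (the tangential divergence theorem on the open piece $\Gamma$, see e.g.\ \cite{Cot2}) then gives
\begin{equation*}
\nabla\cdot\cS_{\Gamma}\bj(\bx)
= \int_{\Gamma}G(\bx,\by)\,\nabla_{\Gamma}\!\cdot\bj(\by)\,dA_{y}
\;-\; \int_{\ell}G(\bx,\by)\,\bj(\by)\cdot\bbb(\by)\,d\ell_{y}
= \cS_{\Gamma}(\nabla_{\Gamma}\!\cdot\bj)(\bx) - \cS_{\ell}(\bj\cdot\bbb)(\bx),
\end{equation*}
with the orientation of $\bbb=\btau\times\bn$ chosen precisely so that the boundary contribution carries the correct sign. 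Taking the gradient of this expression, dividing by $ik$, and substituting into the displayed identity above yields the claimed formula termwise.

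The main obstacle is the justification of the surface integration by parts on the open piece $\Gamma$: one must verify that $\bj$ has enough regularity at the edge $\ell$ for $\bj\cdot\bbb$ to be meaningful as a line density, and that $G(\bx,\cdot)$ is smooth enough against it when $\bx\notin\Gamma$. For smooth $\bj$ and $\bx\notin\overline{\Gamma}$ this is routine; for $\bx\to\Gamma$ one can first prove the identity off the surface and then pass to the boundary using the standard layer-potential jump relations, as $\nabla\cdot\cS_{\Gamma}\bj$ and the surface/line potentials on the right all possess controlled boundary limits. Everything else is bookkeeping of signs and factors of $ik$.
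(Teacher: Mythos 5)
Your argument is correct and follows essentially the same route as the paper: the pointwise identity $\nabla\times\nabla\times = \nabla\nabla\cdot - \Delta$, the Helmholtz equation satisfied by $\cS_{\Gamma}\bj$ off the surface, and Stokes's identity (the tangential divergence theorem) on the open surface to convert $\nabla\cdot\cS_{\Gamma}\bj$ into $\cS_{\Gamma}(\nabla_{\Gamma}\cdot\bj)-\cS_{\ell}(\bj\cdot\bbb)$. The sign bookkeeping via $\nabla_{\bx}G=-\nabla_{\by}G$ and the tangentiality of $\bj$ matches the paper's computation exactly.
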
 

\begin{proof}  
Using the point-wise vector identity 
$\nabla\times \nabla \times {\bf a}= - \Delta {\bf a}+
\nabla \nabla\cdot {\bf a}$, we have
\begin{equation}\label{vectident1}
\frac{1}{ik}\nabla\times\nabla\times \cS_{\Gamma}\bj 
= -\frac{1}{ik} \Delta  \cS_{\Gamma}\bj + \nabla \nabla\cdot 
\cS_{\Gamma}\left(\frac{\bj}{ik}\right).
\end{equation}
Since $\cS_{\Gamma}\bj$ is a vector-valued Helmholtz potential, 
it is clear that
\begin{equation}
-\frac{1}{ik} \Delta  \cS_{\Gamma}\bj = -ik\cS_{\Gamma}\bj.
\end{equation}
For the second term on the right-hand side of~\eqref{vectident1}, 
we apply Stokes's identity~\cite{Ned01} on the surface $\Gamma$:
\begin{equation}
\begin{aligned}
\nabla\cdot \cS_{\Gamma}\left(\frac{\bj}{ik}\right) & = 
-\int_{\Gamma}\nabla_{\by,\Gamma}\cdot\left(G(\bx,\by) \, 
\frac{\bj(\by)}{ik} \right) \, dA_y + \int_{\Gamma}G(\bx,\by) \, 
\frac{ \nabla_\Gamma \cdot 
\bj(\by)}{ik}  \, dA_y\\
& =  - \cS_{\ell}\left(\frac{\bj\cdot \bbb}{ik}\right)
+ \cS_{\Gamma} \left(\frac{\nabla_{\Gamma}\cdot \bj}{ik}\right),
\end{aligned}
\end{equation}
where $\nabla_{\by,\Gamma} \cdot$ denotes the surface divergence 
with respect to the variable $\by$.
\end{proof}

\begin{corollary}\label{corollary1}
Let $\Gamma$ be a surface whose boundary $\ell$ is a curve that
lies on the plane 
$x_3=0$, and let~$\bj$ and~$\bbm$ be surface electric and magnetic 
currents on~$\Gamma$ with image currents defined
in~\eqref{image}. Note that the image currents also lie along $x_3 =
0$. Then
\begin{equation}
\begin{aligned}
\frac{1}{ik}\nabla\times\nabla\times \cS^H_{\Gamma}\bj 
&= -ik\cS^H_{\Gamma}\bj +\nabla \cS^H_{\Gamma}
\left(\frac{\nabla_{\Gamma}\cdot \bj}{ik}\right), \\
\frac{1}{ik}\nabla\times\nabla\times \cS^H_{\Gamma}\bbm &= -ik
\cS^H_{\Gamma}\bbm +\nabla \cS^H_{\Gamma}\left(\frac{\nabla_{\Gamma}
\cdot \bbm}{ik}\right)
-2\nabla \cS_{\ell}\left(\frac{\bbm\cdot \bbb}{ik}\right).
\end{aligned}
\end{equation}

\end{corollary}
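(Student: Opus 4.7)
The plan is to apply Lemma~\ref{lemma1} separately to $\Gamma$ (with current $\bj$ or $\bbm$) and to its reflection $\Gamma_R$ across $\{x_3=0\}$ (with image current $\bj_R$ or $\bbm_R$ from~\eqref{image}), and then to sum the two resulting identities using the $\cS^H$ notation. Because $\ell\subset\{x_3=0\}$ is fixed pointwise by the reflection, both boundary line integrals in the two applications of Lemma~\ref{lemma1} are integrals over the \emph{same} curve $\ell$, so their integrands can be compared pointwise.

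The key geometric observation is that at a point of $\ell$ the co-normal $\bbb=\btau\times\bn$ transforms under the reflection simply by flipping its third component: writing $\bbb=(b_1,b_2,b_3)$, the co-normal to $\Gamma_R$ (perpendicular to $\btau$ in the reflected tangent plane and pointing away from $\Gamma_R$) is $\bbb_R=(b_1,b_2,-b_3)$, since $\btau$ lies in $\{x_3=0\}$ and is invariant while the tangent plane of $\Gamma_R$ is the geometric reflection of that of $\Gamma$. Combined with~\eqref{image}, a one-line dot-product calculation then gives
\[
\bj_R\cdot\bbb_R = -\,\bj\cdot\bbb \quad\text{and}\quad \bbm_R\cdot\bbb_R = +\,\bbm\cdot\bbb \quad \text{on } \ell.
\]
Consequently the two boundary line-integral contributions cancel in the electric case and reinforce in the magnetic case, producing $0$ and $-2\nabla\cS_\ell(\bbm\cdot\bbb/ik)$ respectively, which matches the coefficients in the statement.

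For the bulk terms, the vector single-layer sum is $\cS_\Gamma\bj+\cS_{\Gamma_R}\bj_R=\cS^H_\Gamma\bj$ by definition of the image operator, so the $-ik\,\cS^H_\Gamma$ terms combine immediately. For the scalar single-layer involving $\nabla_\Gamma\cdot\bj$, a short Stokes-type identity on a reflected patch (or, equivalently, a direct coordinate computation using that the reflection is an isometry together with the sign structure in~\eqref{image}) shows that $\nabla_{\Gamma_R}\cdot\bj_R=-\nabla_\Gamma\cdot\bj$ at corresponding points, while $\nabla_{\Gamma_R}\cdot\bbm_R=+\nabla_\Gamma\cdot\bbm$. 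These are precisely the signs that allow the sum of the two scalar single-layer potentials to be written as $\cS^H_\Gamma$ applied to the scalar density $\nabla_\Gamma\cdot\bj/ik$ (respectively $\nabla_\Gamma\cdot\bbm/ik$), with the appropriate image convention implicit in the $\cS^H$ notation. The main obstacle is purely the bookkeeping of these signs and orientations; once they are verified, the corollary follows by summing two instances of Lemma~\ref{lemma1}.
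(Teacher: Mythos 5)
Your proposal is correct and follows exactly the route the paper intends: the corollary is stated without proof as an immediate consequence of Lemma~\ref{lemma1} applied to $\Gamma$ and to its reflection, with the sign bookkeeping for $\bj_R\cdot\bbb_R=-\bj\cdot\bbb$, $\bbm_R\cdot\bbb_R=\bbm\cdot\bbb$, and $\nabla_{\Gamma_R}\cdot\bj_R=-\nabla_\Gamma\cdot\bj$, $\nabla_{\Gamma_R}\cdot\bbm_R=\nabla_\Gamma\cdot\bbm$ that you carry out. Your explicit verification of these signs (and your note that the scalar $\cS^H$ must be read with the image convention inherited from the current) is exactly the content the paper leaves implicit.
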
 

We can now establish the following identity:

\begin{lemma}\label{lfident}
For a tangential electric and magnetic currents $\bj$, $\bbm$ 
along $\Gamma$, defined as in~\eqref{eq_rep_int},
\begin{equation}\label{identity2}
\frac{\nabla_{C_1}\cdot \bj}{ik} = -\bn\cdot \nabla\times \cS_{\Gamma}\bbm .
\end{equation} 
\end{lemma}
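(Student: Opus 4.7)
The plan is to start from the explicit expression for $\bj$ on $C_1$ given by the second equation of the system~\eqref{equsys}, namely
$$
\bj \;=\; \mk_{C_1,\Gamma}\bbm \;=\; \bn\times\frac{1}{ik}\nabla\times\nabla\times\cS_{\Gamma}\bbm
$$
(note that on $C_1$ this is an off-surface evaluation since the source lives on $\Gamma$). Taking the surface divergence on $C_1$ then reduces the claim to a straightforward manipulation of constant-coefficient vector-calculus identities applied to a Helmholtz potential.

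First I would invoke the standard surface identity
$$
\nabla_\Sigma\cdot(\bn\times\bv) \;=\; -\,\bn\cdot(\nabla\times\bv),
$$
valid for any smooth $\bv$ defined in a neighborhood of a smooth surface $\Sigma$ with unit normal $\bn$ (a one-line verification in local coordinates by writing $\bn=\ez$). Applying this to $\bv = \tfrac{1}{ik}\nabla\times\nabla\times\cS_{\Gamma}\bbm$ on $C_1$ yields
$$
\nabla_{C_1}\!\cdot\bj \;=\; -\,\bn\cdot\nabla\times\Bigl(\tfrac{1}{ik}\nabla\times\nabla\times\cS_{\Gamma}\bbm\Bigr).
$$

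Next I would simplify the triple curl. Using $\nabla\times\nabla\times = -\Delta + \nabla\nabla\cdot$ and the fact that $\nabla\times\nabla \equiv 0$, one gets the pointwise identity $\nabla\times\nabla\times\nabla\times = -\Delta\,\nabla\times$. Since $\bx\in C_1$ lies off $\Gamma$, the vector field $\nabla\times\cS_{\Gamma}\bbm$ is a componentwise Helmholtz potential there, so $-\Delta(\nabla\times\cS_{\Gamma}\bbm) = k^2\,\nabla\times\cS_{\Gamma}\bbm$. Substituting gives
$$
\nabla_{C_1}\!\cdot\bj \;=\; -\,\bn\cdot\Bigl(\tfrac{k^2}{ik}\nabla\times\cS_{\Gamma}\bbm\Bigr) \;=\; -ik\,\bn\cdot\nabla\times\cS_{\Gamma}\bbm,
$$
and dividing by $ik$ yields the identity~\eqref{identity2}.

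There is no real obstacle here; the only subtle point is making sure the surface identity $\nabla_\Sigma\cdot(\bn\times\bv) = -\bn\cdot\nabla\times\bv$ is applied legitimately, which requires $\bv$ to be smooth in a neighborhood of $C_1$. This is fine since $\bbm$ is supported on $\Gamma$ (disjoint from $C_1$), so $\cS_{\Gamma}\bbm$ and all of its derivatives are smooth near $C_1$. The argument also extends unchanged to the reflected kernel $\cS^H$, as the image of $\Gamma$ lies in the lower half-space and hence away from $C_1$ as well, should one wish to record the analogous identity including images.
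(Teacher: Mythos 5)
Your proof is correct, but it takes a genuinely different route from the paper's. The paper first rewrites both field representations using its Corollary~\ref{corollary1} (so that the $1/ik$ terms appear as $\nabla \cS^H_{C_1}(\nabla_{C_1}\cdot\bj/ik)$), then takes the difference of the exterior and interior traces of $\be$ on $C_1$, extracts normal components, and invokes the jump relation for the normal derivative of the single-layer potential together with the continuity condition $\bn\cdot\beext=\bn\cdot\beint$ on $C_1$; the identity then falls out as a statement of charge/normal-flux continuity across the artificial interface. You instead work directly from the second equation of~\eqref{equsys}, $\bj=\mk_{C_1,\Gamma}\bbm$, apply the surface identity $\nabla_\Sigma\cdot(\bn\times\bv)=-\bn\cdot\nabla\times\bv$, and collapse the triple curl via $\nabla\times\nabla\times\nabla\times=-\Delta\,\nabla\times$ and the Helmholtz equation off the source surface. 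Your computation checks out: the evaluation is off-surface since $\Gamma$ and $C_1$ are separated by the buffer $B$, so no jump relations are needed at all, and the argument is more self-contained (the paper's proof quietly assumes the normal continuity of $\be$ on $C_1$, which itself has to be deduced from the tangential continuity of $\bh$ by essentially the same surface-curl identity you use). What the paper's route buys is consistency with the machinery it has just set up --- Lemma~\ref{lemma1} and Corollary~\ref{corollary1} are reused verbatim, and the physical meaning of $\nabla_{C_1}\cdot\bj/ik$ as a surface charge density is made transparent. One minor point: your justification of $\nabla_\Sigma\cdot(\bn\times\bv)=-\bn\cdot\nabla\times\bv$ by ``writing $\bn=\ez$'' is only literally a one-liner on a flat patch; on the curved hemisphere $C_1$ you should instead cite the standard fact that $\bn\cdot\nabla\times\bv$ equals the surface curl of the tangential trace of $\bv$ (e.g.\ N\'ed\'elec or Colton--Kress), which gives the identity on any smooth surface. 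This is a presentational quibble, not a gap.
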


\begin{proof}
First, apply Corollary~\ref{corollary1} to rewrite the representation of 
the electric field in the form
\begin{equation}\label{eq_rep_2}
\begin{aligned}
\beext &= ik\cS^H_{C_1}\bj - \nabla
         \cS^H_{C_1}\left(\frac{\nabla_{C_1}\cdot \bj}{ik}\right)
         +\nabla \times \cS_{C_1}^H \bbm +\nabla \times \cS_{B} \bbm, \\
\beint &= ik\cS^H_{C_1}\bj - \nabla \cS^H_{C_1}
\left(\frac{\nabla_{C_1}\cdot \bj}{ik}\right) + \nabla \times
         \cS_{C_1}^H \bbm 
+ \nabla \times \cS_{B\cup \Gamma} \bbm.
\end{aligned}
\end{equation}
Taking the difference of~$\beext$ and~$\beint$ in~\eqref{eq_rep_2}, 
computing normal components, using the
jump relations for the single layer potential, and recalling 
the  continuity condition
\begin{equation}
\bn\cdot \beext = \bn\cdot \beint \quad \text{on } C_1,
\end{equation}
 yields the desired result in~\eqref{identity2}.
\end{proof}

We can now derive low-frequency versions of integral
equation~\eqref{equonlyM} and representations for the electric and
magnetic fields.

\subsection{A modified integral equation}

Using the previous results, we can now derive an integral equation
along~$\Gamma$ which does not suffer from low-frequency breakdown,
as does equation~\eqref{equonlyM}.
Inspection of the various terms in~\eqref{equonlyM}
shows that $1/ik$ scaling is present only in the term 
\begin{equation}
 -\mk^H_{\Gamma, C_1}\mk_{C_1,\Gamma}\bbm.
\end{equation}
By application of Lemma~\ref{lemma1} and use 
of the result in Corollary~\ref{corollary1}, 
this term can be replaced as
\begin{equation}\label{equlowfreqterm}
 -\mk^H_{\Gamma, C_1}\mk_{C_1,\Gamma}\bbm = 
\bn\times ik \cS^H_{C_1}\bj -\bn\times \nabla \cS^H_{C_1}
\left(\frac{\nabla_{C_1}\cdot \bj}{ik}\right),
\end{equation}
where, recalling that by equation~\eqref{equsys}
\begin{equation}\label{eq_jinm}
\begin{aligned}
\bj = \mk_{C_1,\Gamma}\bbm = \bn\times\left(\frac{1}{ik} 
\nabla\times\nabla\times \cS_{\Gamma}\bbm\right).
\end{aligned}
\end{equation}
Using~\eqref{eq_jinm}, 
the first term in~\eqref{equlowfreqterm} can be rewritten in terms of 
$\bbm$ on $\Gamma$:
\begin{equation} \label{identity1}
\bn \times ikS^H_{C_1}\bj 
= \bn \times \cS_{C_1}^H \left( 
\bn\times \nabla\times\nabla\times \cS_{\Gamma}\bbm \right).
\end{equation}
Combining~\eqref{identity1} and~\eqref{identity2}, we can 
have the modified equation along $\Gamma$:
\begin{multline}\label{nobreakdown}
\frac{1}{2}\bbm+\bn\times \cS_{C_1}^H\left( \bn \times
\nabla\times\nabla\times \cS_{\Gamma}\bbm \right) 
+ \bn\times \nabla \cS^H_{C_1}\left( \bn\cdot \nabla\times
\cS_{\Gamma}\bbm \right)  \\
+\mn^H_{\Gamma, C_1} \mn_{C_1,\Gamma} \bbm  - 2 \mn_{\Gamma, B}
\mn_{B, \Gamma}\bbm + \mn_{\Gamma, \Gamma}\bbm= -\bn\times 
\beinc.
\end{multline}
This is equivalent to~\eqref{equonlyM}, but is clearly immune from 
low-frequency breakdown.

\subsection{Field calculations}

Furthermore, given the solution~$\bbm$ to integral
equation~\eqref{nobreakdown}, we have the following representations
for the electric field which do not suffer from low-frequency
breakdown:
\begin{equation}\label{eq_e_lowfreq}
\begin{aligned}
  \beext &= ik \cS^H_{C_1} \mk_{C_1,\Gamma} \bbm - \nabla \cS^H_{C_1}\left( \bn\cdot
    \nabla\times \cS_{\Gamma}\bbm \right)
  +\nabla \times \cS_{C_1}^H \bbm +\nabla \times \cS_{B} \bbm, \\
  \beint &= i k  \cS^H_{C_1} \mk_{C_1,\Gamma}
    \bbm  - \nabla \cS^H_{C_1}\left( \bn\cdot
    \nabla\times \cS_{\Gamma}\bbm \right) +\nabla \times \cS_{C_1}^H
  \bbm +\nabla \times \cS_{B\cup \Gamma} \bbm.
\end{aligned}
\end{equation}
Once $\bbm$ along $\Gamma$ 
has been obtained by solving~\eqref{nobreakdown}, we can
obtain $\bbm$ on $C_1\cup B$ through~\eqref{equsys}.
Evaluation via representation~\eqref{eq_e_lowfreq} is stable as
$k\to 0$.
On the other hand, we are left with computing the magnetic field as
\begin{equation}
\bhext = \frac{1}{ik} \nabla \times 
\beext, \qquad \bhint = \frac{1}{ik} \nabla \times \beint,
\end{equation}
which are inherently first-order operations in the variable~$\bbm$ and
will obviously suffer from low-frequency breakdown.
As an alternative, we may try to rewrite $\bh$ using vector identities
and in terms of the variable $\bj$. In the case of $\bhint$, for
example, we have
\begin{equation}
\bhint = \nabla\times \cS_{C_1}^H \bj 
+\frac{1}{ik}\nabla \times\nabla \times \cS_{C_1}^H \bbm
                  +\frac{1}{ik}\nabla 
\times\nabla \times \cS_{B\cup \Gamma} \bbm.
\end{equation}
The expression for $\bhext$ is nearly identical.
Any attempt using the previous lemmas or corollary to simplify this
representation will require the (numerical) evaluation of the quantities
$\bj$, $\nabla_\Gamma \cdot \bbm/ik$, and $\bbm \cdot \bbb /ik$.
Empirically, these expressions can be evaluated at the cost of a mild
loss of numerical precision. 
The quantity~$\bj$ can be directly evaluated via $\bj =
\mk_{C_1,\Gamma} \bbm$, or by using the identity:
\begin{equation}\label{Jcurrent}
\bj = \bn\times\left( 
-ik\cS_{\Gamma}\bbm +\nabla \cS_{\Gamma}\left( \frac{\nabla_{\Gamma}\cdot
  \bbm}{ik} \right)
 -\nabla \cS_{\ell}\left ( \frac{\bbm\cdot \bbb}{ik}\right) \right).
\end{equation}
We compute the function $\rho_M = \nabla_\Gamma \cdot \bbm/ik$ merely by
spectral differentiation in a $10^{\text{th}}$-order Legendre
discretization, and the term~$\phi_M = \bbm \cdot \bbb /ik$ by
extrapolation.

These schemes lead to roughly an~$\mathcal O(\log k)$ loss in absolute
precision. For example, as discussed in the numerical examples
section, for $k \approx 10^{-10}$, we are able to obtain approximately
6 digits of accuracy.

\begin{remark}
  The function $\phi_M$ cannot be evaluated naively for
  small~$k$. Numerical experiments indicate that
  ${\bbm\cdot \bbb} \approx \mathcal O(k)$ as $k \rightarrow 0$, but
  we have not found a proof of this fact. Similar to $\rho_M$, we
  numerically found that $\phi_M$ has a low-frequency limit. In
  particular, we can formally expand:
\begin{equation}
  \frac{\bbm \cdot \bbb}{ik} = m_1 + km_2 + \ldots.
\end{equation}
The quantity $\phi_M$ can then be evaluated for several distinct
non-zero values of $k$, and the coefficients $m_j$ can be estimated
to the desired order of accuracy. These estimated values can then be
used to compute $\phi_M$ for any $k\geq 0$. The result can then be
used in~\eqref{Jcurrent} to evaluate the $\bh$ field.  This form of
low-frequency breakdown is, therefore, much less pernicious than that
addressed by Lemma~\ref{lfident}, and that present in the evaluation
of $\rho_M$, which is sometimes referred to as
dense-mesh breakdown. Nevertheless, we consider it to be an open
problem to find an integral formulation which avoids the need for this
asymptotic approach.
\end{remark}

\section{Separation of variables for boundary 
integral operators}
\label{sec_fourier}

For arbitrarily shaped cavities, a full three-dimensional treatment of
quadratures and geometry discretization
is required to evaluate the integral operators discussed in the
previous section, not to mention schemes to
solve the corresponding integral equations.
While fast multipole methods reduce the computational complexity
of applying such integral operators to 
$\mathcal O(N)$ or $\mathcal O(N \log N)$, 
(see for example,~\cite{Cheng2006}) and high-order
quadratures have been developed for weakly-singular kernels 
on arbitrary surface triangulations~\cite{Bremer2012},
the associated constants implicit in the $\mathcal O(\cdot)$ scaling
are relatively large.
On the other hand, for a wide class of geometries -- namely those with
rotational symmetry -- two useful accelerations are easily obtained.
First, in problems for which the Green's function is translation
invariant,
one can apply separation of variables in the azimuthal angle,
$\theta$, relative to the $x_3$-axis, 
and then Fourier decompose the problem.
This transforms the original integral equation, defined on a surface
in three dimensions, into a 
sequence of uncoupled integral equations (one for each Fourier mode)
defined along a one-dimensional \emph{generating curve}. 
Second, the resulting linear systems are much smaller,
 and the associated quadrature issues
are much easier to 
address~\cite{HK2014,Hao2015,YHM2012,Liu2015,gedney_1990,oneil_2016}.
We avoid a detailed description of axisymmetric integral equation
solvers here, and instead point the reader to the previous references
for discussions related to quadrature and kernel evaluation.
In what follows below, we provide a brief description of the
discretization relevant to our cavity scattering problem.

\begin{figure}
\center
\includegraphics[width=.7\linewidth]{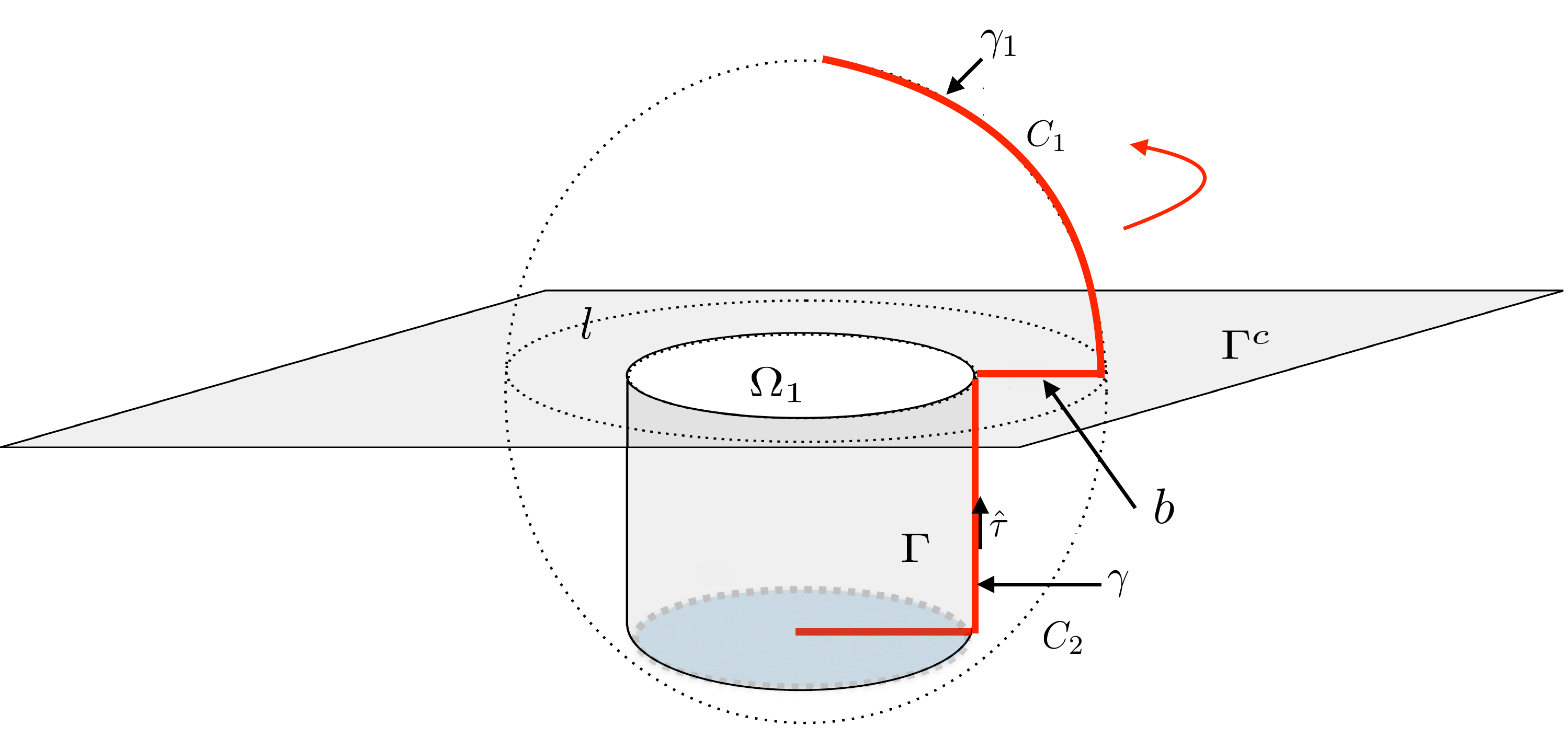}
\caption{Axisymmetric cavity with generating curve $g = 
\gamma\cup b \cup \gamma_1$. }\label{figure2}
\end{figure}

\subsection{Modal kernels and operators}
\label{sec_operators}

A simple example of the geometrical setup is shown in Fig.~\ref{figure2}.
A point $\bx=(x_1,x_2,x_3)$ in $\mathbb R^3$ will be denoted in
the usual cylindrical coordinates as $\bx=(r,\theta,z)$.
The standard unit vectors in cylindrical 
coordinates will be denoted as $(\er,\et,\ez)$.
The generating curve is given by
\mbox{$g= \gamma\cup b \cup \gamma_1$}, which we assume is 
parameterized 
by $g(s) = (r(s),z(s))$, where $s$ denotes arclength.
The tangent vector along the generating curve 
is then $\bt= \frac{dr}{ds} \er + \frac{dz}{ds} \ez$, and 
the exterior normal $\bn$ is given by \mbox{$\bn = \frac{dz}{ds}  \er - 
\frac{dr}{ds} \ez$}. 
Relative to the surface frame $\bt$, $\bn$, $\et$, any tangential
vector field $\bj$ along $\Gamma \cup B \cup C_1$ can be written as:
\begin{equation}\label{eq_jsurf}
\bj(s,\theta) = \sum_m (J_m^1(s) \, \bt + J_m^2(s) \, \et) \, e^{im\theta} .
\end{equation}
In the scalar case, a second-kind boundary integral equation 
on a body of revolution $\Omega$,
\begin{equation}
u(\bx) + \int_{\partial\Omega_1} G(\bx-\bx') \, u(\bx') \, dA(\bx') = f(\bx),
\end{equation}
can immediately be decomposed into a sequence of
decoupled equations
\begin{equation}\label{scalar1}
u_m(\bx) + 2\pi \int_g G_m(r,z,r',z') \, u_m(r',z') \, r' \, 
ds(r',z') = f_m(r,z)
\end{equation}
where
\begin{equation}\label{scalar2}
\begin{aligned}
u(\bx) &= \sum_m u_m(r,z) \, e^{im\theta}, \\
f(\bx) &= \sum_m f_m(r,z) \, e^{im\theta}, \\
G(\bx-\bx') &= \sum_m G_m(r,z,r',z') \, e^{im(\theta-\theta')}.
\end{aligned}
\end{equation}
However, in the vector-valued integral equation setting, 
the components of the unknown $\bj$ do not fully 
separate when expressed in terms of
local, tangential coordinates~\cite{oneil_2016}.
We therefore need to compute the action of the single-layer potential
operator $\cS$, and its derivatives, on a tangential density $\bj$.
Using the fact that relative to the Cartesian unit vectors $\exx$,
$\eyy$, $\ezz$:
\begin{equation}
\begin{aligned}
\er &= \cos\theta \, \exx + \sin\theta \, \eyy, \\
\et &= -\sin\theta \, \exx + \cos\theta \, \eyy, \\
\ez &=  \ezz,
\end{aligned}
\end{equation}
it is straightforward to verify that if $\bj$ is as
in~\eqref{eq_jsurf}, then
\begin{equation}\label{eq_sj}
\cS_{\Gamma}\bj = \sum_m \left( c_m^1 \er + c_m^2 \et + c_m^3\ez
\right) \, e^{im\theta}
\end{equation}
where 
\begin{equation}
\begin{aligned}
c_m^1 &=  \int_{\gamma} \gccm (r,z,r',z') \, J^1_m(s) \, \frac{dr'}{ds}
 \, r' \, ds -i \int_{\gamma} \gssm(r,z,r',z') \, J^2_m(s) \, r' \, ds, \\
c_m^2 &= i\int_{\gamma} \gssm(r,z,r',z') \, J^1_m(s) \frac{dr'}{ds} \, 
r' \, ds 
+\int_{\gamma} \gccm(r,z,r',z') \, J^2_m(s) \, r' \, ds, \\
c_m^3  &= \int_{\gamma}  \gm (r,z,r'z') \, J^1_m(s) \frac{dz'}{ds} \, 
r' \, ds,
\end{aligned}
\end{equation}
and the kernels $\gm$, $\gccm$, and $\gssm$ are given by:
\begin{equation}\label{eq_green123}
\begin{aligned}
\gm(r,z,r',z') &= \frac{1}{2\pi} 
\int_{0}^{2\pi} \frac{e^{ikR}}{4\pi R} \, e^{-im\phi} \, 
d\phi , \\
\gccm(r,z,r',z') &= \frac{1}{2\pi} 
\int_{0}^{2\pi} \frac{e^{ikR}}{4\pi R} \, \cos m\phi \,
\cos\phi \,  d\phi,  \\
\gssm(r,z,r',z') &= \frac{1}{2\pi} 
\int_{0}^{2\pi} \frac{e^{ikR}}{4\pi R} \, \sin m\phi \,
\sin\phi \, d\phi,
\end{aligned}
\end{equation}
with $R$ given by the Euclidian distance in cylindrical coordinates:
\begin{equation}
R = \sqrt{r^2+r'^2 - 2 rr'\cos\phi + (z-z')^2},
\end{equation}
with $\phi = \theta-\theta'$ denoting the azimuthal angle between
$\bx$ and $\bx'$.
It is understood in the previous formulas that for \emph{source}
points $(r',z')$ on the boundary, evaluation is in terms of
the parameterization of the curve, i.e.: $r' = r(s')$ 
and $z' = z(s')$ for
some $s'$.
Expressions for all other boundary operators, for example $\mathcal K$ 
and $\mathcal N$, can be obtained from the above expressions by taking
derivatives with respect to sources and targets.

In particular, 
formulas for the curl and divergence of $\cS \bj$ can be calculated
immediately from application of these operators in cylindrical
coordinates to expression~\eqref{eq_sj}, with the partial derivatives
being taken directly on the kernel functions. Differentiation with
respect to $\theta$ is diagonal. For this reason, 
we omit these formulas here.

However, it is useful to provide an expression for 
a less common differential operator, namely $\nabla \nabla \cdot$,
used when applying $\nabla \times \nabla \times$ to Helmholtz
potentials, see Lemma~\ref{lemma1}. The operator~$\mk$ involves this.
To this end,  we have:
\begin{multline}
  \nabla \nabla \cdot \cS_{\Gamma}\bj
  = \left( -\frac{c_m^1}{r^2}+\frac{1}{r}\frac{\partial
      c_m^1}{\partial r}
+\frac{\partial^2c_m^1}{\partial r^2} 
-\frac{im}{r^2}c_m^2 +\frac{im}{r}\frac{\partial c_m^2}{\partial r} 
+ \frac{\partial^2c_m^3}{\partial r\partial z}\right)\er \\
  +\frac{im}{r}\left(\frac{c_m^1}{r} + \frac{\partial
    c_m^1}{\partial r} +\frac{im}{r}c_m^2 +
  \frac{\partial c_m^3}{\partial z}\right) \et
  +\bigg(\frac{1}{r}\frac{\partial c_m^1}{\partial z} +
  \frac{\partial^2c_m^1}{\partial z \partial r}
  +\frac{im}{r}\frac{\partial c_m^2}{\partial z} +
  \frac{\partial^2 c_m^3}{\partial z^2}\bigg)\ez.
\end{multline}
Furthermore, in order to discretize~\eqref{nobreakdown}, the modified
integral equation free from low-frequency breakdown, we also require
the potential induced by a scalar density on $\Gamma$. In particular, in
equation~\eqref{nobreakdown}, the term
$\bn \cdot \nabla \times \cS\bbm$ is a scalar function to which a
layer potential operator must be applied. Using equation \eqref{scalar1}-\eqref{scalar2}, the calculation of scalar single layer potential is straightforward. The gradient is then given using the standard form of the gradient in
cylindrical coordinates.

\begin{remark}
  While separation of variables has permitted us to reduce two
  dimensional surface integrals to one dimensional line integrals, the
  kernels $\gm$, $\gccm$ and $\gssm$ defined in~\eqref{eq_green123}
  are not available in closed form. See~\cite{HK2014} for a
  description on how to efficiently evaluate them. In the numerical
  examples of this work, we merely  use adaptive Gaussian quadrature
  for their evaluation. Significant speedups in the resulting code
  could be obtained by optimizing their calculation. Our goal is
  merely to demonstrate the behavior of our novel integral equation
  for scattering from cavities.
\end{remark}

\subsection{Discretization}
\label{sec-disc}

We discretize each smooth segment of the piecewise-smooth boundary by 
a set of panels of uniform length, so that there are at least
12 points per 
wavelength. We then refine the end panels on each segment dyadically until the 
smallest segment is of length $\epsilon$, where $\epsilon$ is the desired 
precision. Each panel is discretized using 10 Gauss-Legendre nodes,
and we utilize $10^{\text{th}}$-order accurate generalized Gaussian
quadratures~\cite{BGR2010} as the basis
for a Nystr\"{o}m  method (which takes into account the logarithmic 
singularity in the kernel). For a review and comparison of various
Nystr\"om-type discretizations, see~\cite{hao_2014}.
 Adaptive Gaussian quadrature is used to compute 
the modal Green's function element by element, as well as for
nearly-singular interactions.

In a slight abuse of notation, 
for an integral operator $\mathcal K$, we will denote by $\mtx{K}_m$
the matrix obtained from discretizing the $m^{\text{th}}$ mode of
$\mathcal K$.
Using the formulas of the previous section, 
we can discretize equation~\eqref{nobreakdown} as:
\begin{equation}
\left( \frac{1}{2}\mtx{I} + ik\mtx{T}\mtx{S}_m^{(1)}\mtx{K}_m
+\mtx{T}\mtx{S}_m^{(2)} \mtx{U} \mtx{S}_m^{(3)} + \mtx{N}_m^{(2)} \mtx{N}_m^{(1)}
- 2\mtx{N}_m^{(4)} \mtx{N}_m^{(3)} + \mtx{N}_m^{(5)} \right) \vct{M}_m
= -\mtx{T}\vct{E}^{\text{inc}}_m
\end{equation}
where the matrices above are discretizations of \emph{a single mode} of 
the continuous operators as follows:
\begin{equation}
\begin{aligned}
\mtx{I} &= \mathcal I, &\quad \mtx{T} &= \bn \times, &\quad 
\mtx{U} &= \bn \cdot, \\
\mtx{S}_m^{(1)} &\approx \cS^H_{C_1,m}, &
\mtx{S}_m^{(2)} &\approx \nabla \cS^H_{C_1,m}, &
\mtx{S}_m^{(3)} &\approx \nabla \times \cS_{\Gamma,m},\\
\mtx{K}_m &\approx \mk_{C_1,\Gamma,m}, &
\mtx{N}_m^{(1)} &\approx \mn_{C_1,\Gamma,m}, & 
\mtx{N}_m^{(2)} &\approx \mn^H_{\Gamma,C_1,m}, \\
\mtx{N}_m^{(3)} &\approx \mn_{B,\Gamma,m}, & 
\mtx{N}_m^{(4)} &\approx \mn_{\Gamma,B,m}, &
\mtx{N}_m^{(5)} &\approx \mn_{\Gamma,\Gamma,m}, \\
\end{aligned}
\end{equation}
and the discretization of the $m^{\text{th}}$ mode of the
solution $\bbm$ is given by $\vct{M}_m$ and the incoming data $\beinc$
is given by $\vct{E}^{\text{inc}}_m$. 
The matrix $\mtx{N}_m^{(5)}$ correspond to a layer potential with 
singular kernel, and is therefore obtained via discretization with generalized
Gaussian quadrature. All other matrices correspond to layer potentials
without singular kernels (no self-interactions) and therefore can be
discretized using smooth and adaptive Gaussian quadrature (near 
geometric refinement). The matrix $\mtx{K}_m$ can be constructed by
discretizing either the operator $\nabla \times \nabla \times \cS$, or
by invoking Lemma~\ref{lemma1}.

\section{Numerical examples}
\label{sec_numeri}

In this section, we illustrate the performance of our algorithm for three 
distinct piecewise smooth cavity structures. Because of the singularities
induced in the densities at the non-smooth junction points, dyadic refinement 
is applied on each segment, as discussed in section~\ref{sec-disc}. 

We normalize the physical length scale so that the cavity can be covered 
by a hemisphere $C_1$ with radius 2, centered at $(0,0,0)$. 
To test the accuracy of the solver, we first create an artificial problem, 
in which the exact solution is known. For this, we choose the field in 
$\mathbb{R}^{3+}\backslash\Omega_1$ to be generated by a current loop 
located in $\Omega_1$, and the field in $\Omega_1$ to be generated by a 
current loop located in $\mathbb{R}^{3+}\backslash\Omega_1$. 
In other words, the exact exterior
field in $\mathbb{R}^{3+}\backslash\Omega_1$ is
\begin{equation}
\begin{aligned}
\beexte &= -\frac{1}{ik}\nabla\times \nabla\times S^H_{\ell_1} \bj_\theta, \\
\bhexte &= \nabla\times S^H_{\ell_1}\bj_\theta,
\end{aligned}
\end{equation}
and the exact interior field in $\Omega_1$ is given by
\begin{equation}
\begin{aligned}
\beinte &= -\frac{1}{ik}\nabla\times \nabla\times S^H_{\ell_2} \bj_\theta, \\
\bhinte &= \nabla\times S^H_{\ell_2}\bj_\theta,
\end{aligned}
\end{equation}
where~$\ell_1$ and~$\ell_2$ are horizontal circular loops with radius
0.5. The loop~$\ell_1$ is located in~$\Omega_1$ and~$\ell_2$
in~$\mathbb{R}^{3+}\backslash\Omega_1$, each with current
density~$\bj =e^{i\theta}\et $. Given the field representation
in~\eqref{eq_rep_ext} and~\eqref{eq_rep_int}, one can introduce the
jump conditions along $C_1$ and the boundary condition
on~$B\cup \Gamma$ consistent with the specified analytic solution.
Note that, although we only use a single azimuthal mode in the current
that defines the exact solution, the number of Fourier modes needed to
resolve the actual field depends on the location of the loops.  To
test only the solver for the~$m=1$ mode, we center~$\ell_1$
at~$(0,0,0.3)$ and~$\ell_2$ at~$(0,0,3.3)$.  To test the full
three-dimensional problem, we place the center of~$\ell_1$
at~$(0,0,0.3)$ but move the second loop off-axis, centering~$\ell_2$
at~$(1.2,0,3.3)$.  We use as many modes as required in order to
resolve the data to precision~$\epsilon$ (which depends in part on the
governing frequency~$k$).

We also solve a true scattering problem with incident plane wave:
\begin{align*}
\Einc &= (\hat{\mathbf d}\times \hat{ \mathbf p}) \times 
\hat{\mathbf d} \, e^{ik\hat{\mathbf d}\cdot \bx}  -
(\hat{ \mathbf d}'\times \hat{ \mathbf p}')\times \hat{ \mathbf d}' \, 
e^{ik\hat{\mathbf d}'\cdot \bx}\\
\Hinc &= \hat{\mathbf d} \times \hat{\mathbf p} \, 
e^{ik\hat{\mathbf d} \cdot \bx} -\hat{\mathbf d}' \times 
\hat{\mathbf p}' \, e^{ik\hat{\mathbf d}'\cdot \bx}
\end{align*}
where $\hat{\mathbf d}$ is the propagation direction,
$\hat{\mathbf p}$ is the polarization, $\hat{\mathbf d}'$ 
and $\hat{\mathbf p}'$ are
the reflected directions with respect to $x_3 = 0$. Through out all
the examples, we choose
\begin{align*}
  \hat{\mathbf d} &= \left(\cos(\pi/4)\sin(\pi/8),\, 
\sin(\pi/4)\sin(\pi/8), \, \cos(\pi/8)\right),  \\
  \hat{\mathbf p} &= \left(\cos(\pi/5)\sin(\pi/10), \,
 \sin(\pi/5)\sin(\pi/10), \, \cos(\pi/10)    \right).
\end{align*}
We make use of the following notation in subsequent tables:
\begin{itemize}
\item $k$: the governing wavenumber,
\item $N_f$: the number of Fourier modes used to resolve the solution,
\item $N_{pts}$: the total number of points used 
to discretize $\Gamma$, $C_1$ and $B$,
\item  $N_{tot}$: the total number of unknowns in the discretized 
integral equation,
\item $T_{matgen}$: the time (secs.) to construct the relevant 
matrix entries for all integral equations,
\item $T_{solve}$: the time  (secs.) to solve the linear system
\item $E_{error}$: the relative $L^2$ error measured at a few random points 
inside the cavity.
\end{itemize}  

 All experiments were implemented in \textsc{Fortran 90} and
carried out on an Intel Xeon 2.5GHz workstation with 
60 cores and 1.5 terabytes of memory. We made use of \textsc{OpenMP} for
parallelism across decoupled Fourier modes, 
and simple block LU-factorization using \textsc{LAPACK} for matrix inversion.
Various fast direct solvers such 
as~\cite{GYM2012,HG2012,Liu2015,JL2014} could be applied if 
larger problems were involved, no effort was made to further
accelerate our code.

\subsection*{Example 1}

\begin{table}[!t]
\center
\caption{Results for rectangular 3D cavity at different wavenumber.}
\label{table_1}
\begin{tabular}{|c|c|c|c|c|c|c|}
\hline
 $k$ & $N_f$ & $N_{pts}$ & $N_{tot}$ & $T_{matgen}(s)$ & $T_{solve}(s)$ & $E_{error}$  \\
\hline
$1$ & 1 &240 & $600$  & $53.5$ & 0.12 & $2 \cdot 10^{-14}$\\
$10$ & 1 &240 & $600$ & $80.9$ & 0.12 & $5 \cdot 10^{-9}$ \\
$10$ & 41 &240 & $600$ & $176$  & 0.25 & $4 \cdot 10^{-9}$ \\
$20$ & 1 &480 & $1200$ & $329.4$ & 1.4 & $6 \cdot 10^{-7}$ \\
$20$ & 41 & 480 & $1200$ & $602.5$ & 1.5 & $4 \cdot 10^{-7}$ \\
$40$ & 1 & 960 & $2400$ & $1918.9$ & 18.9 & $5 \cdot 10^{-6}$ \\
$40$ & 41 &960 & $2400$ & $3545.9$ & 18.4 & $2 \cdot 10^{-5}$ \\
\hline
\end{tabular}
\end{table}

\begin{figure}[!b]
\begin{subfigure}[t]{.35\linewidth}
    \centering
    \includegraphics[scale=0.35]{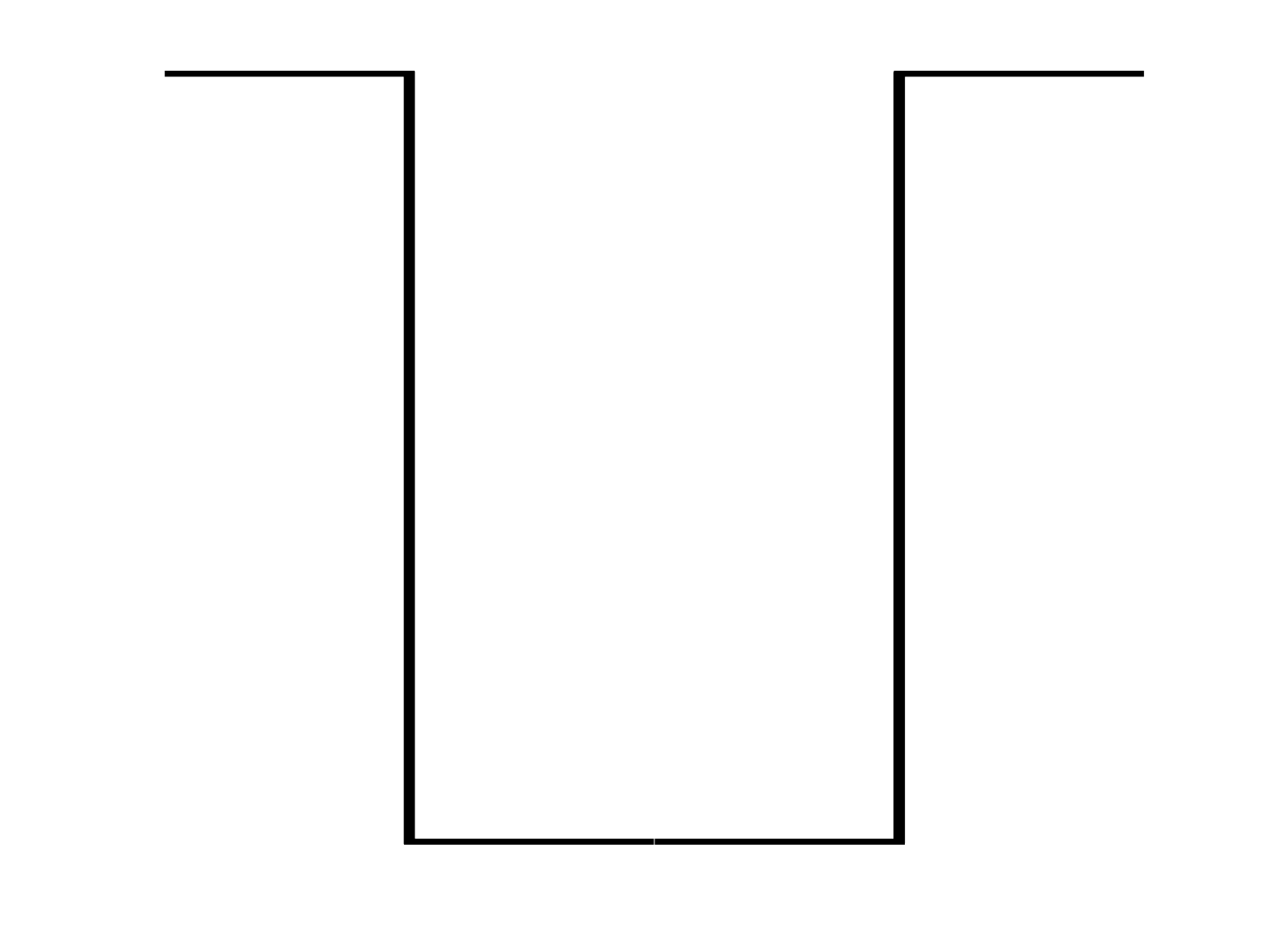} 
    \caption{}\label{fig_3a}
\end{subfigure}
\begin{subfigure}[t]{.35\linewidth}
    \centering
    \includegraphics[scale=0.25]{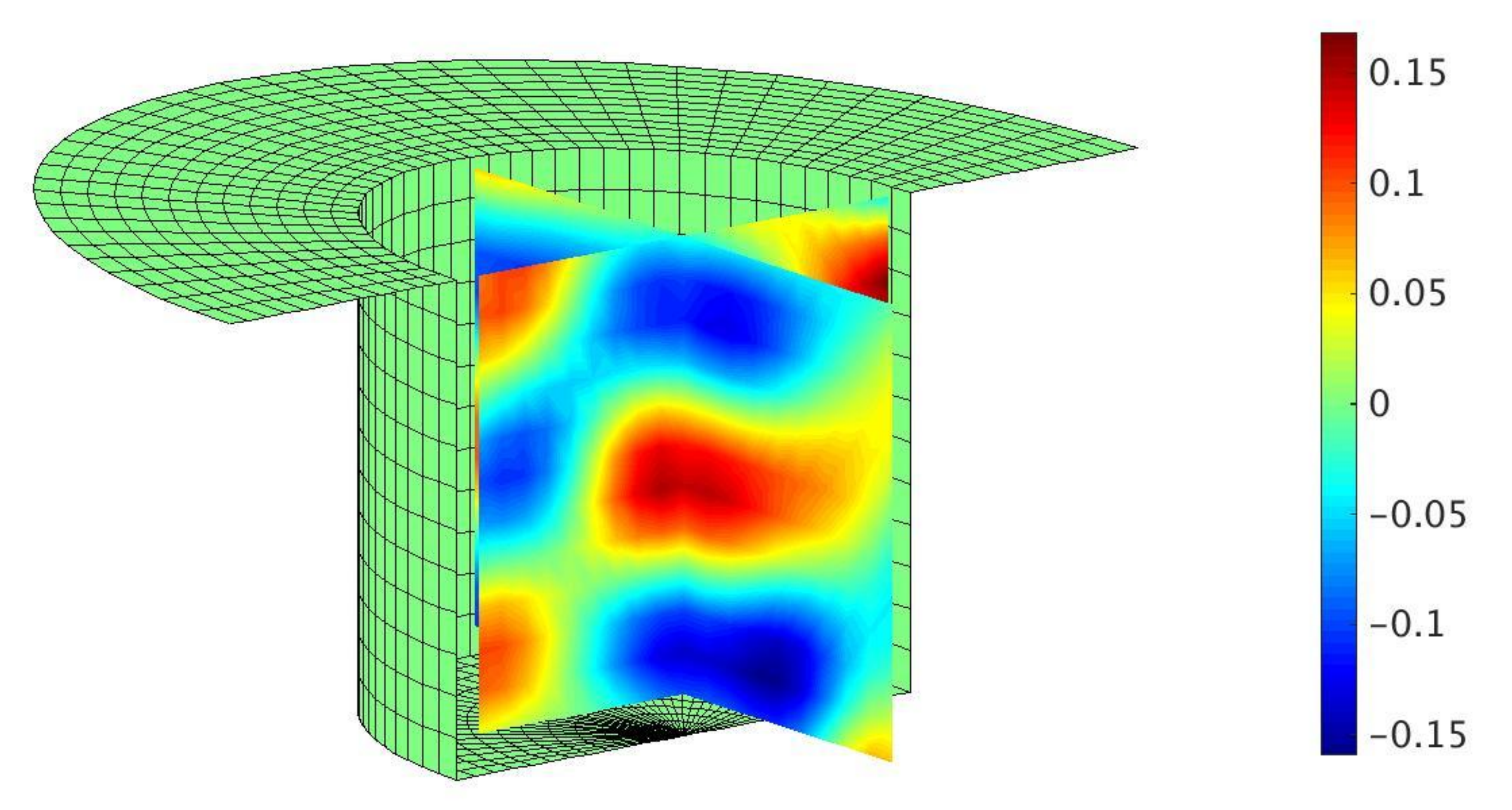}
    \caption{}\label{fig_3b}
  \end{subfigure}    
\caption{Result for example 1. (a) Cross section of the cavity. (b) Real part of the scattered electrical field $E_x$ at $k=10$ from a plane wave incidence.}\label{figure3}
\end{figure}

We first 
consider a cavity with rectangular cross section (Fig.~\ref{fig_3a}). 
The depth and radius of the cavity are both set to $1$. 
We solve the test problem described above at several wavenumbers, with
accuracy results shown in Table~\ref{table_1}. Note that the CPU time is 
dominated by the computation of matrix elements, which scales
quadratically with the number of unknowns.
Because distinct Fourier 
modes are uncoupled, the solution of the various linear systems
is embarassingly parallel and, for the problem sizes considered, does not 
dominate the cost despite the asymptotic $O(N_{pts}^3)$ scaling. 
Note also that the accuracy is very high at low wavenumbers, and slowly
deteriorates at higher wavenumber. The is due largely to the corner 
singularities in the density $\bbm$, 
which are stronger with increasing wavenumber. Additional 
additional refinement is necessary if higher  
accuracy were required. The scattering field for plane wave incidence at $k=10$ is given in Fig.~\ref{fig_3b} with 41 modes.

Results in Table~\ref{table_1} are obtained through solving 
eq.~\eqref{equsys}, without the low-frequency stabilization
of eq.~\eqref{nobreakdown} for $k\ge 1$. In Fig.~\ref{fig_lowfreq}, 
we show the difference in using~\eqref{equsys} or~\eqref{nobreakdown} as
$k \rightarrow 0$. Low-frequency breakdown is clearly manifested
in the original formulation, while~\eqref{nobreakdown} remains stable.

\begin{figure}[!t]
\centering
\includegraphics[width=.9\linewidth]{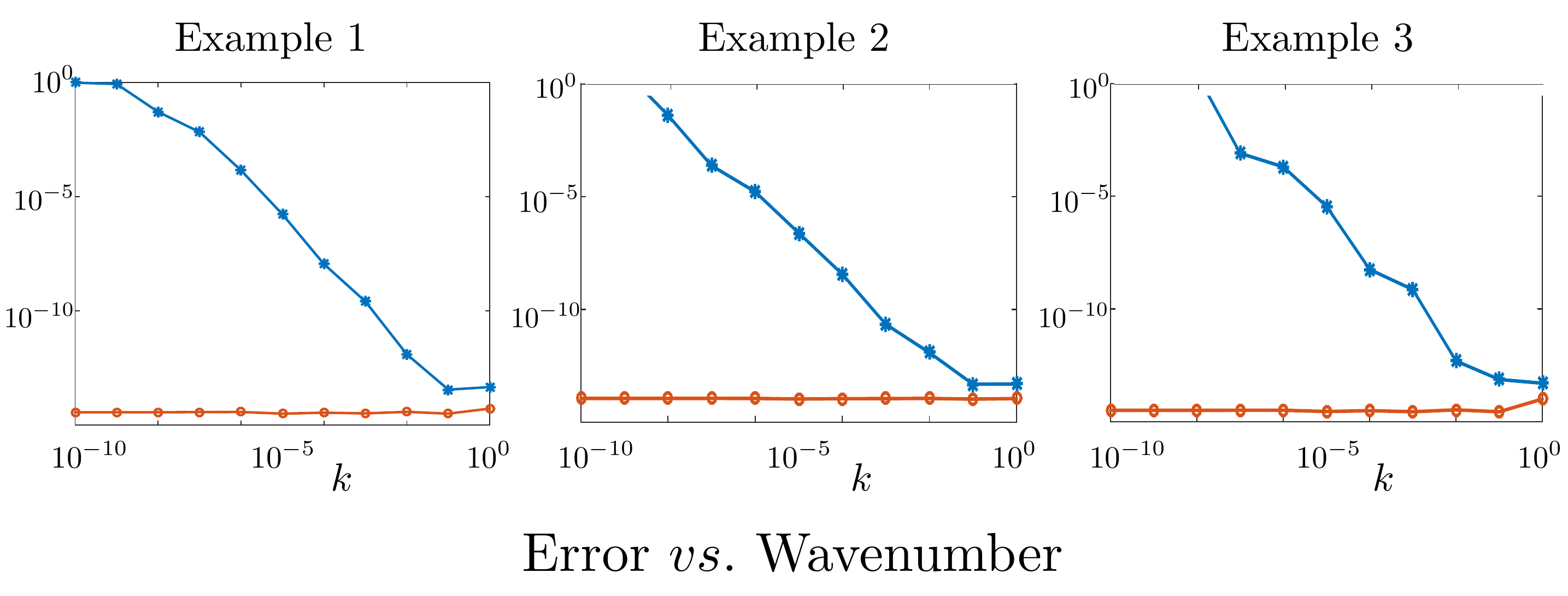}
\caption{Illustration of low-frequency breakdown in the electric
  field.  For Examples 1-3, the lower curve (red) shows the error of
  the solution obtained from~\eqref{nobreakdown} as a function of~$k$
  and the upper curve (blue) shows the error of the solution obtained
  from solving~\eqref{equsys}.}\label{fig_lowfreq}
\end{figure}

\subsection*{Example 2}

For our second example, we consider the cavity
whose generating curve  (Fig.~\ref{fig_4a}) is given by
\begin{equation}
\begin{aligned}
r(s) &= [1-0.1\sin(6\pi s)]\sin(\frac{\pi}{2}s),  \\
z(s) &= -[1-0.1\sin(6\pi s)]\cos(\frac{\pi}{2}s),
\end{aligned} 
\end{equation}
for $s\in [0,1]$.
The incoming field is again generated by the loop source as stated at the beginning of this section.
Accuracy results are provided in Table~\ref{table_2} 
for various wavenumbers. 
A sufficient number of points is used to obtain 
high accuracy at all wavenumbers, with the
computational cost again dominated by  $T_{matgen}$. Fig.~\ref{fig_4b} shows the plane wave scattering at $k=10$ with 41 modes. 
We also compare the behavior of eqs.~\eqref{equsys} and~\eqref{nobreakdown} 
in terms of low-frequency breakdown (Fig.~\ref{fig_lowfreq}) and see the
advantages of~\eqref{nobreakdown} as $k \rightarrow 0$.

\begin{table}[!b]
\center
\caption{Numerical results for the cavity of Example 2
at various wavenumbers.}
\label{table_2}
\begin{tabular}{|c|c|c|c|c|c|c|}
\hline
 $k$ & $N_f$ & $N_{pts}$ & $N_{tot}$ & $T_{matgen}(s)$ & $T_{solve}(s)$ & $E_{error}$  \\
\hline
$1$ & 1 &360 & $960$  & $112.0$ & 0.5 & $4 \cdot 10^{-15}$\\
$10$ & 1 &360 & $960$ & $138.4$ & 0.5 & $5 \cdot 4^{-13}$ \\
$10$ & 41 &360 & $960$ & $273.4$  & 0.5 & $5 \cdot 10^{-13}$ \\
$20$ & 1 &720 & $1920$ & $578.8$ & 9.3 & $4 \cdot 10^{-12}$ \\
$20$ & 61 & 720 & $1920$ & $1237.5$ & 12.3 & $4 \cdot 10^{-12}$ \\
$40$ & 1 & 1440 & $3840$ & $4068.7$ & 134.5 & $5 \cdot 10^{-11}$ \\
$40$ & 81 &1440 & $3840$ & $9137.6$ & 145.1 & $7 \cdot 10^{-11}$ \\
\hline
\end{tabular}
\end{table}

\begin{figure}[!t]
\begin{subfigure}[t]{.35\linewidth}
    \centering
    \includegraphics[scale=0.35]{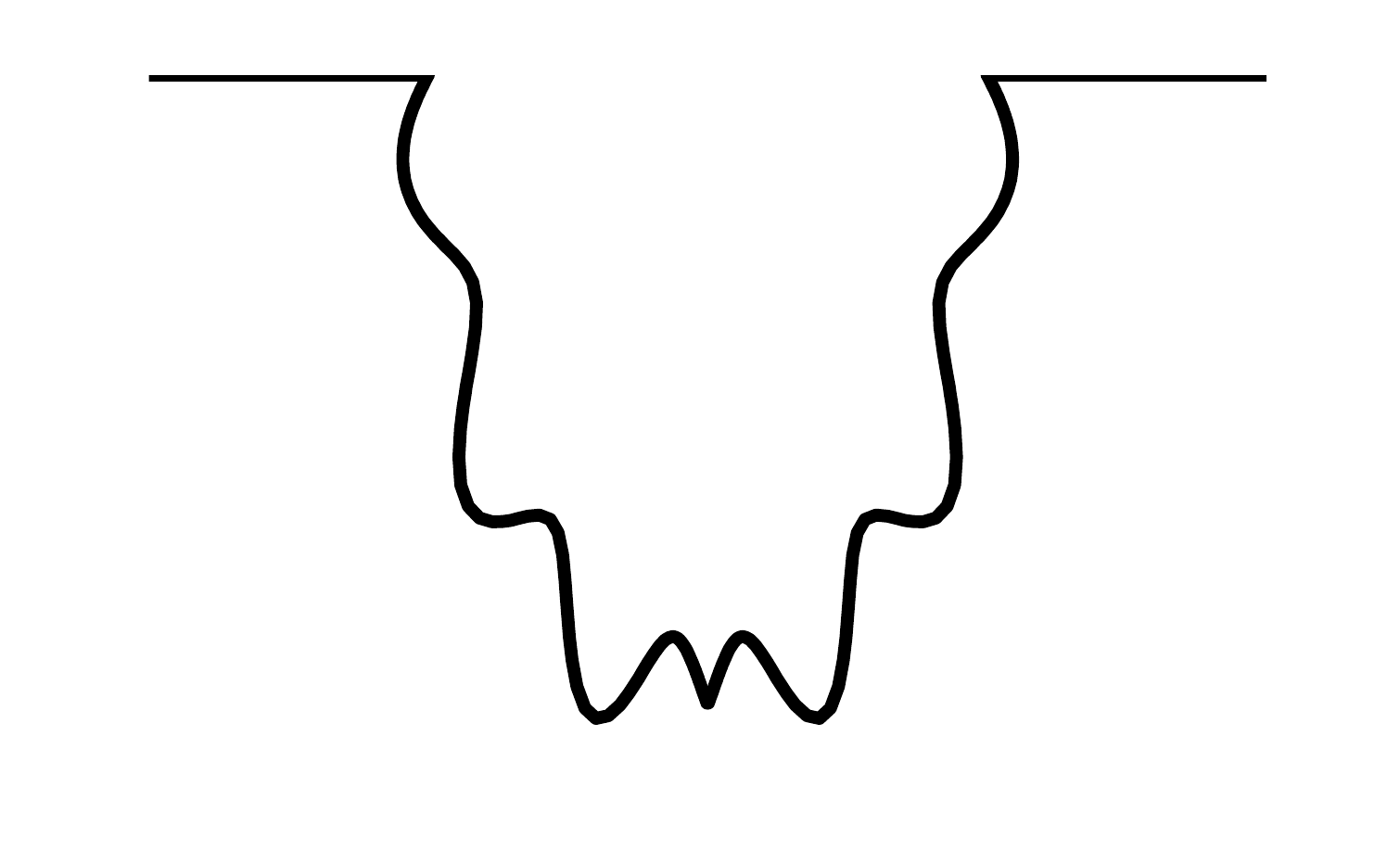} 
    \caption{}\label{fig_4a}
\end{subfigure}
\begin{subfigure}[t]{.35\linewidth}
    \centering
    \includegraphics[scale=0.25]{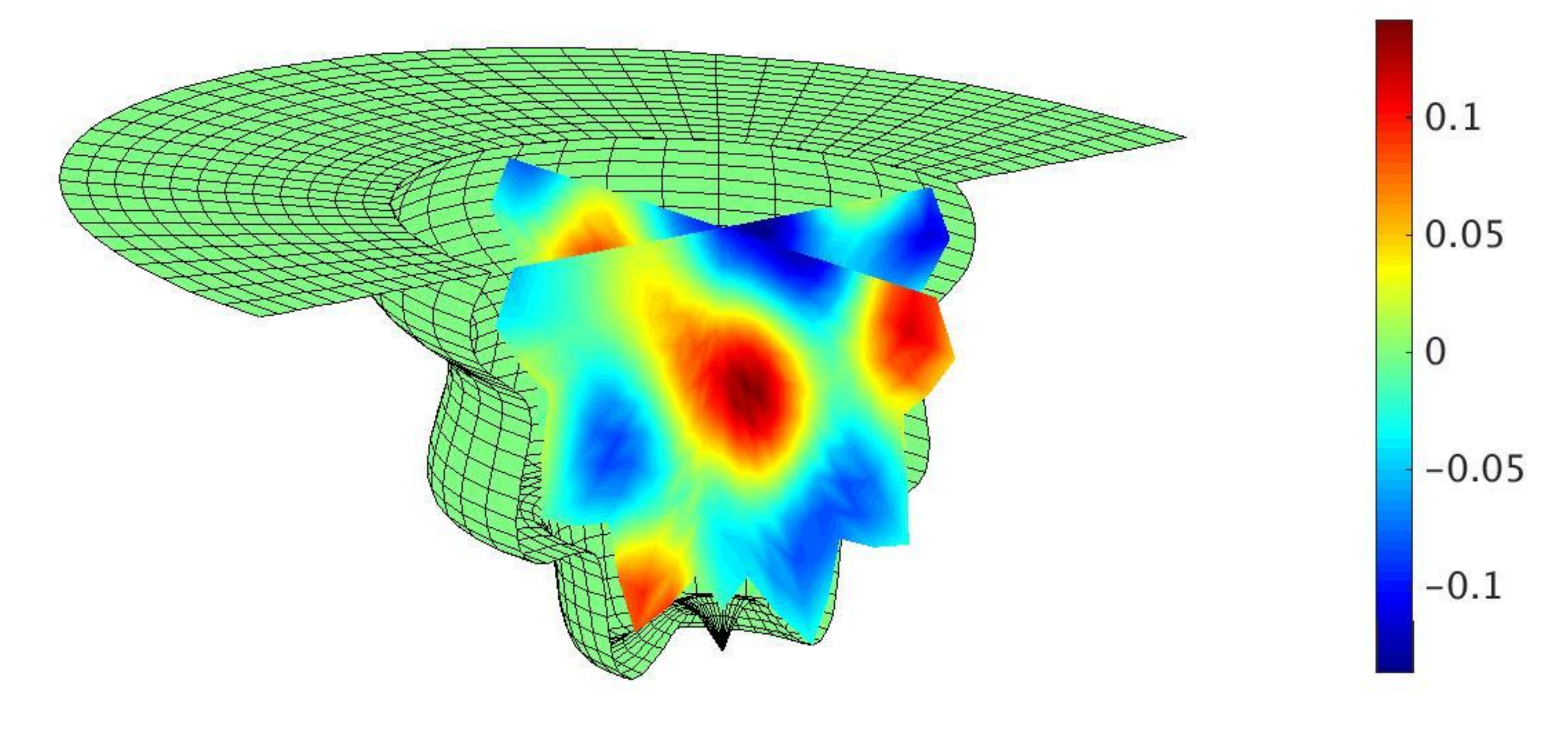}
    \caption{}\label{fig_4b}
  \end{subfigure}    
  \caption{Results for Example 2. (a) Cross section of the cavity. (b)
    Real part of the scattered electric field $E_x$ at
    $k=10$ from a plane wave incidence.}\label{figure4}
\end{figure}

\subsection*{Example 3}

For our final example, we consider the cavity generated by a polygonal 
curve whose vertex coordinates are 
given by
\begin{equation}
V = \left\{
  (0,-0.5), \, (0.5,-0.5), \, (0.5,-0.7), \, (0.25,-0.7), \, 
(0.25,-1), \, (1,-1), \, (1,0) 
\right\},
\end{equation}  
see Figure~\ref{fig_5a}.

We employ dyadic refinement on each segment to resolve the various
corner singularities.
Results are shown in Table~\ref{table_3}, with accuracies given
by comparison with the exact data. For plane wave incidence at $k=10$, Fig.~\ref{fig_5b} gives the scattered field.
The low-frequency behavior is illustrated in Fig.~\ref{fig_lowfreq}, which shows the advantage of equation \eqref{nobreakdown} again.

\begin{figure}[!b]
\begin{subfigure}[t]{.35\linewidth}
    \centering
    \includegraphics[scale=0.35]{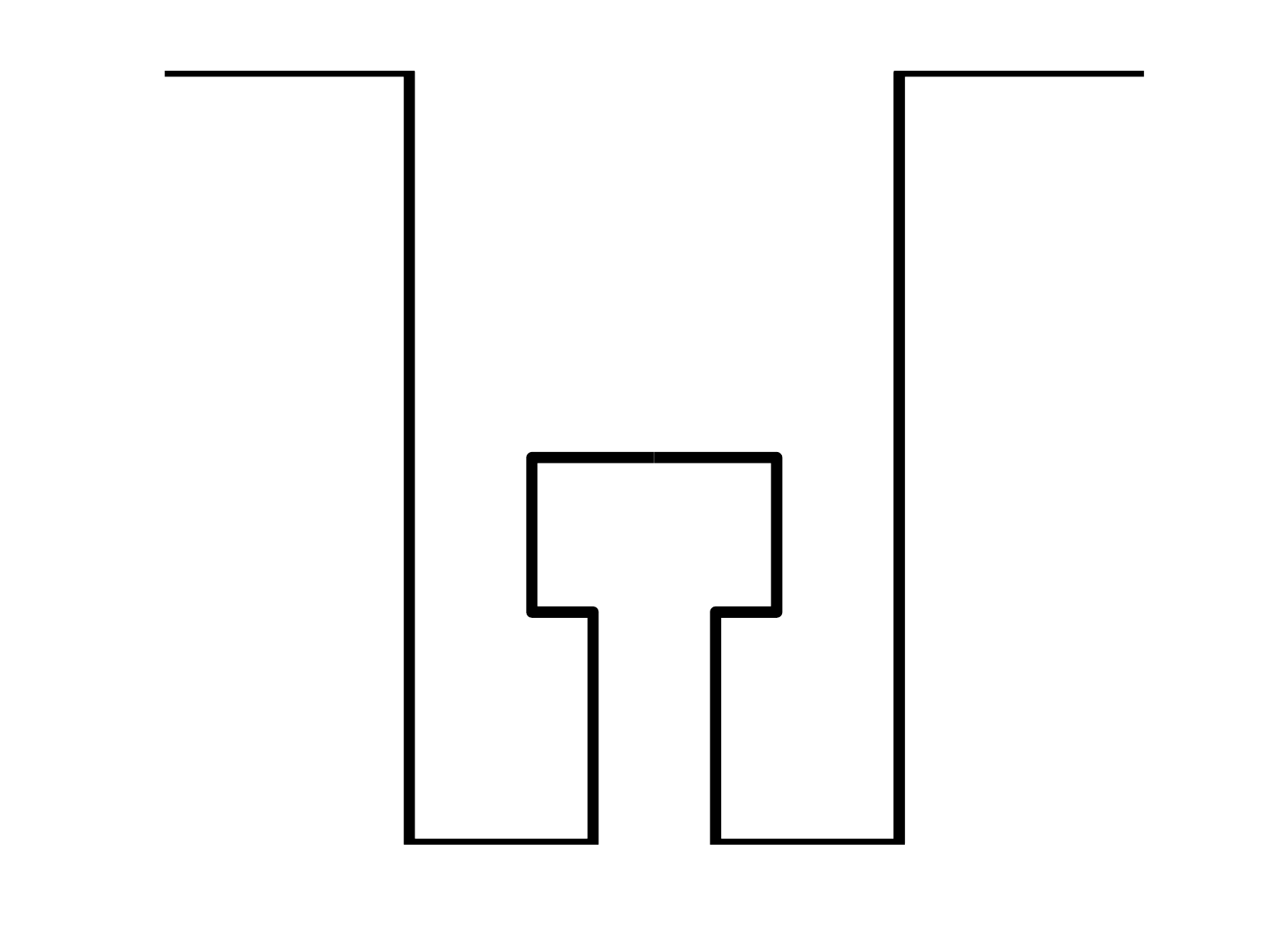} 
    \caption{}\label{fig_5a}
\end{subfigure}
\begin{subfigure}[t]{.35\linewidth}
    \centering
    \includegraphics[scale=0.25]{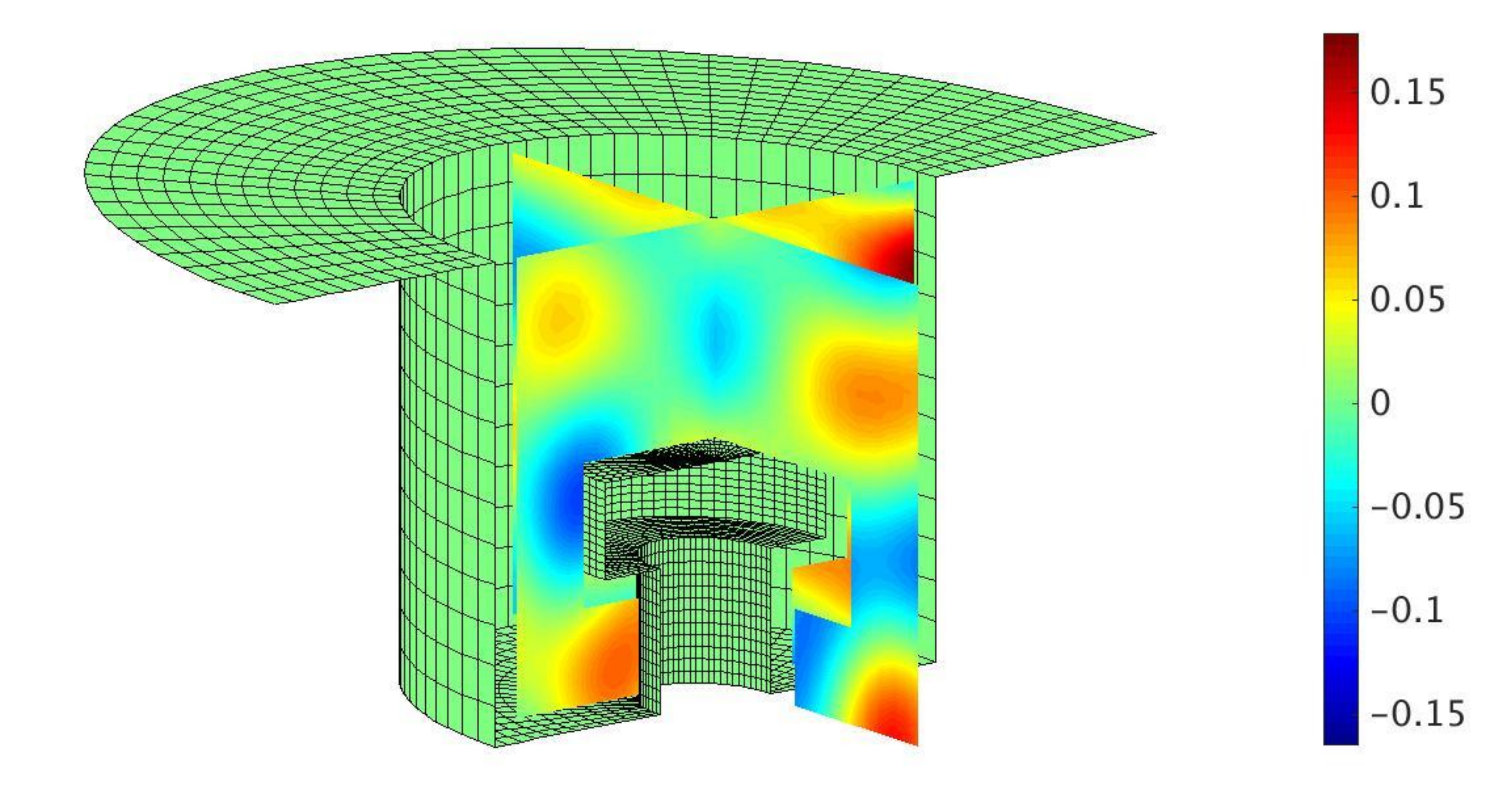}
    \caption{}\label{fig_5b}
  \end{subfigure}    
  \caption{Results for Example 3. (a) Cross section of the cavity. (b)
    Real part of the scattered electrical field $E_x$ for
    $k=10$ from a plane wave incidence.}\label{figure5}
\end{figure}

\begin{table}[!t]
\center
\caption{Numerical results for the cavity of Example 3
at various wavenumbers.}
\label{table_3}
\begin{tabular}{|c|c|c|c|c|c|c|}
\hline
 $k$ & $N_f$ & $N_{pts}$ &$N_{tot}$ & $T_{matgen}(s)$ & $T_{solve}(s)$ & $E_{error}$  \\
\hline
$1$ & 1 & 480 & $1080$  & $201.2$ & 0.70 & $2 \cdot 10^{-14}$\\
$10$ & 1 & 480 & $1080$ & $246.7$ & 0.70 & $5 \cdot 10^{-9}$ \\
$10$ & 41 &480 & $1080$ & $558.5$  & 0.75 & $4 \cdot 10^{-9}$ \\
$20$ & 1 &960 & $2160$ & $1217.8$ & 13.9 & $6 \cdot 10^{-7}$ \\
$20$ & 41 &960 & $2160$ & $2425.8$ & 13.2 & $4 \cdot 10^{-7}$ \\
$40$ & 1 &1920 & $4320$ & $5252.7$ & 119.6 & $5 \cdot 10^{-6}$ \\
\hline
\end{tabular}
\end{table}

\section{Conclusions}
\label{sec_con}

In this paper, we have developed a new integral representation for 
the problem of scattering from a three-dimensional cavity embedded in a
perfectly conducting half-space which leads to a well conditioned integral 
equation.
The resulting integral equation is resonance free for all
wavenumbers $k$,
immune from
low-frequency (dense-mesh) breakdown, and we have established
existence and uniqueness for its solution. In particular, the
resulting linear system is well-conditioned all the way down to the
static limit.
Furthermore, the solution to this integral equation allows for 
the accurate reconstruction of
the electric field in the limit as $k \to 0$. However,
since inherently the unknowns in our formulation are current-like
vector fields, reconstruction of the magnetic field suffers (albeit
only mildly) from low-frequency breakdown. In order to overcome this,
alternative representations using charge-like variables would have to
be developed.

The effectiveness of the scheme was demonstrated in rotationally
symmetric cavities using separation of variables
in the azimuthal direction, and a subsequent
high-order integral equation method on the cavity's generating curve.
The solution for each mode involves only line integrals along the
generating curve that defines the geometry. This permits the 
use of efficient generalized Gaussian quadratures and
stable, adaptive mesh refinement into the geometric singularities.
We illustrated the performance of the scheme with several examples.

As discussed in section~\ref{sec_lowfreq}, one open question concerns
a mild form of low-frequency breakdown in evaluating the magnetic field.
We are presently investigating whether the use of generalized Debye 
sources~\cite{EG10,EG13} can be used to overcome this
issue. Furthermore, in the
present paper, we have made strong use of axisymmetry in developing
a numerical solver. We are working on extending the relevant code to
arbitrarily-shaped cavities using fully three-dimensional quadratures
on triangulated surfaces.
\bibliographystyle{abbrv}
\bibliography{}  

\begin{thebibliography}{10}

\bibitem{HGA2}
H.~Ammari, G.~Bao, and A.~W. Wood.
\newblock An integral equation method for the electromagnetic scattering from
  cavities.
\newblock {\em Math. Meth. Appl. Sci.}, 23:1057--1072, 2000.

\bibitem{HGA1}
H.~Ammari, G.~Bao, and A.~W. Wood.
\newblock Analysis of the electromagnetic scattering from a cavity.
\newblock {\em Japan J. Indust. Appl. Math.}, 19(2):301--310, 2002.

\bibitem{HGA3}
H.~Ammari, G.~Bao, and A.~W. Wood.
\newblock A cavity problem for {M}axwell's equation.
\newblock {\em Meth. Appl. Anal.}, 9(2):249--260, 2002.

\bibitem{GJP}
G.~Bao, J.~Gao, and P.~Li.
\newblock Analysis of direct and inverse cavity scattering problems.
\newblock {\em Numer. Math. Theor. Meth. Appl.}, 4:419--442, 2011.

\bibitem{GLin}
G.~Bao, J.~Gao, J.~Lin, and W.~Zhang.
\newblock Mode matching for the electromagnetic scattering from
  three-dimensional large cavities.
\newblock {\em IEEE Antennas Wireless Propag.}, 60:2004--2010, 2012.

\bibitem{BJL2}
G.~Bao and J.~Lai.
\newblock Optimal shape design of a cavity for radar cross section reduction.
\newblock {\em SIAM J. Control Optim.}, 52(4):2122--2140, 2014.

\bibitem{BJL1}
G.~Bao and J.~Lai.
\newblock Radar cross section reduction of a cavity in the ground plane.
\newblock {\em Commun. Comput. Phys.}, 15:895--910, 2014.

\bibitem{GW1}
G.~Bao and W.~Sun.
\newblock A fast algorithm for the electromagnetic scattering from a large
  cavity.
\newblock {\em SIAM J. Sci. Comput.}, 27:553--574, 2005.

\bibitem{GKZ}
G.~Bao, K.~Yun, and Z.~Zhou.
\newblock Stability of the scattering from a large electromagnetic cavity in
  two dimensions.
\newblock {\em SIAM J. Math. Anal.}, 44(1):383--404, 2012.

\bibitem{Bremer2012}
J.~Bremer and Z.~Gimbutas.
\newblock {A Nystr{\"{o}}m method for weakly singular integral operators on
  surfaces}.
\newblock {\em J. Comput. Phys.}, 231(14):4885--4903, may 2012.

\bibitem{BGR2010}
J.~Bremer, Z.~Gimbutas, and V.~Rokhlin.
\newblock A nonlinear optimization procedure for generalized {G}aussian
  quadratures.
\newblock {\em SIAM J. Sci. Comput.}, 32(4):1761--1788, jun 2010.

\bibitem{BCS2002}
A.~Buffa, M.~Costabel, and D.~Sheen.
\newblock On traces for {${\bf H}({\bf curl},\Omega)$} in {L}ipschitz domains.
\newblock {\em J. Math. Anal. Appl.}, 276:845--867, 2002.

\bibitem{GBS}
R.~Burkholder and P.~Pathak.
\newblock {Analysis of EM penetration into and scattering by electrically large
  open waveguide cavities using Gaussian beam shooting}.
\newblock {\em Proc. IEEE}, 79:1401--1412, 1991.

\bibitem{Cheng2006}
H.~Cheng, W.~Y. Crutchfield, Z.~Gimbutas, L.~Greengard, F.~Ethridge, J.~Huang,
  V.~Rokhlin, N.~Yarvin, and J.~Zhao.
\newblock {A wideband fast multipole method for the Helmholtz equation in three
  dimensions}.
\newblock {\em J. Comput. Phys.}, 216(1):300--325, jul 2006.

\bibitem{Cot2}
D.~Colton and R.~Kress.
\newblock {\em Integral Equation Method in Scattering Theory}.
\newblock Wiley-Interscience, New York, 1983.

\bibitem{EG10}
C.~L. Epstein and L.~Greengard.
\newblock {Debye Sources and the Numerical Solution of the Time Harmonic
  Maxwell Equations}.
\newblock {\em Comm. Pure Appl. Math.}, 63(4):413--463, 2010.

\bibitem{EG13}
C.~L. Epstein, L.~Greengard, and M.~O'Neil.
\newblock {Debye Sources and the Numerical Solution of the Time Harmonic
  Maxwell Equations II}.
\newblock {\em Comm. Pure Appl. Math.}, 66(5):753--789, 2013.

\bibitem{gedney_1990}
S.~D. Gedney and R.~Mittra.
\newblock The use of the {FFT} for the efficient solution of the problem of
  electromagnetic scattering by a body of revolution.
\newblock {\em IEEE Trans. Antennas Propag.}, 38:313--322, 1990.

\bibitem{GYM2012}
A.~Gillman, P.~M. Young, and P.-G. Martinsson.
\newblock A direct solver with {$O(N)$} complexity for integral equations on
  one-dimensional domains.
\newblock {\em Front. Math. China}, 7(2):217--247, 2012.

\bibitem{GHL2014}
K.~L. Greengard, L.~Ho and J.-Y. Lee.
\newblock A fast direct solver for scattering from periodic structures with
  multiple material interfaces in two dimensions.
\newblock {\em J. Comput. Phys.}, 258:738--751, 2014.

\bibitem{GL2012}
L.~Greengard and J.-Y. Lee.
\newblock Stable and accurate integral equation methods for scattering problems
  with multiple material interfaces in two dimensions.
\newblock {\em J. Comput. Phys.}, 231:2389--2395, 2012.

\bibitem{hao_2014}
S.~Hao, A.~H. Barnett, P.~G. Martinsson, and P.~Young.
\newblock High-order accurate {N}ystr\"om discretization of integral equations
  with weakly singular kernels on smooth curves in the plane.
\newblock {\em Adv. Comput. Math.}, 40:245--272, 2014.

\bibitem{Hao2015}
S.~Hao, P.-G. Martinsson, and P.~Young.
\newblock {An efficient and highly accurate solver for multi-body acoustic
  scattering problems involving rotationally symmetric scatterers}.
\newblock {\em Comput. Math. Appl.}, 69:304--318, 2015.

\bibitem{HK2014}
J.~Helsing and A.~Karlsson.
\newblock An explicit kernel-split panel-based {N}ystr{\"o}m scheme for
  integral equations on axially symmetric surfaces.
\newblock {\em J. Comput. Phys.}, 272:686--703, 2014.

\bibitem{HK15}
J.~Helsing and A.~Karlsson.
\newblock Determination of normalized electric eigenfields in microwave
  cavities with sharp edges.
\newblock {\em arXiv:1506.05717}, 2015.

\bibitem{HG2012}
K.~L. Ho and L.~Greengard.
\newblock A fast direct solver for structured linear systems by recursive
  skeletonization.
\newblock {\em SIAM J. Sci. Comput.}, 34(5):2507--2532, 2012.

\bibitem{KH2015}
A.~Kirsch and F.~Hettlich.
\newblock {\em The Mathematical Theory of Time-Harmonic Maxwell's Equations}.
\newblock Springer Verlag, Cham, Switzerland, 2015.

\bibitem{Kucharski2000}
A.~A. Kucharski.
\newblock {A method of moments solution for electromagnetic scattering by
  inhomogeneous dielectric bodies of revolution}.
\newblock {\em IEEE Trans. Antennas Propag.}, 48(8):1202--1210, 2000.

\bibitem{JL2014}
J.~Lai, S.~Ambikasaran, and L.~F. Greengard.
\newblock {A Fast Direct Solver for High Frequency Scattering from a Large
  Cavity in Two Dimensions}.
\newblock {\em SIAM J. Sci. Comput.}, 36(6):B887--B903, 2014.

\bibitem{Li2012}
P.~Li.
\newblock {An inverse cavity problem for Maxwell's equations}.
\newblock {\em J. Differ. Equations}, 252(4):3209 -- 3225, 2012.

\bibitem{PLA15}
P.~Li, L.-L. Wang, and A.~Wood.
\newblock {Analysis of Transient Electromagnetic Scattering from a
  Three-Dimensional Open Cavity}.
\newblock {\em SIAM J. Appl. Math.}, 75(4):1675--1699, 2015.

\bibitem{SHT}
H.~Ling, R.~Chou, and S.~Lee.
\newblock {Shooting and bouncing rays: Calculating the RCS of an arbitrarily
  shaped cavity}.
\newblock {\em IEEE Trans. Antennas Propag.}, 37:194--205, 1989.

\bibitem{JIN2}
J.~Liu and J.~Jin.
\newblock A special higher order finite-element method for scattering by deep
  cavities.
\newblock {\em IEEE Trans. Antennas Propag.}, 48:694--703, 2000.

\bibitem{Liu2015}
Y.~Liu and A.~H. Barnett.
\newblock {Efficient numerical solution of acoustic scattering from
  doubly-periodic arrays of axisymmetric objects}.
\newblock {\em arXiv:1506.05083}, 2016.

\bibitem{Ned01}
J.-C. Nedelec.
\newblock {\em Acoustic and Electromagnetic Equations}.
\newblock Springer-Verlag New York, 2001.

\bibitem{oneil_2016}
M.~O'Neil, C.~L. Epstein, and L.~Greengard.
\newblock High-order wideband direct solvers for electromagnetic scattering
  from bodies of revolution.
\newblock 2016.
\newblock In preparation.

\bibitem{papas}
C.~H. Papas.
\newblock {\em Theory of Electromagnetic Wave Propagation}.
\newblock Dover, New York, NY, 1988.

\bibitem{Perez-Arancibia2014}
C.~P{\'{e}}rez-Arancibia and O.~P. Bruno.
\newblock {High-order integral equation methods for problems of scattering by
  bumps and cavities on half-planes}.
\newblock {\em J. Opt. Soc. Am. A}, 31(8):1738--1746, 2014.

\bibitem{YHM2012}
P.~Young, S.~Hao, and P.~G. Martinsson.
\newblock A high-order {N}ystr\"om discretization scheme for boundary integral
  equations defined on rotationally symmetric surfaces.
\newblock {\em J. Comput. Phys.}, 231(11):4142--4159, 2012.

\bibitem{ZC2000}
J.-S. Zhao and W.~C. Chew.
\newblock {Integral Equation Solution of Maxwell's Equations from Zero
  Frequency to Microwave Frequencies}.
\newblock {\em IEEE Trans. Antennas Propag.}, 48(10):1635--1645, 2000.

\end{thebibliography}
\end{document}